\newcommand{\bLam}{\boldsymbol{\Lambda}}
\newtheorem{theorem}{Theorem}
\newtheorem{remark}{Remark}
\newtheorem{corollary}{Corollary}
\newtheorem{lemma}{Lemma}
\pgfplotsset{compat=1.16}
\renewcommand{\todo}[2][]{\tikzexternaldisable\@todo[#1]{#2}\tikzexternalenable}
\begin{document}

\title{Mixed-dimensional modeling of vascular tissues with reduced Lagrange multipliers}
\author{
	Camilla Belponer\thanks{Institute of Mathematics, Universität Augsburg, Augsburg, Germany; Weierstrass Institute for Applied Mathematics and Stochastics (WIAS), Berlin, Germany}
	\and
	Alfonso Caiazzo\thanks{Weierstrass Institute for Applied Mathematics and Stochastics (WIAS), Berlin, Germany}
	\and
	Luca Heltai\thanks{Department of Mathematics, University of Pisa, Pisa, Italy}
}
\date{}

\maketitle

\section*{Abstract}
This paper presents a numerical method for the simulation of multiscale materials composed of an elastic matrix and slender active inclusions. 
The setting is motivated by the modeling of vascularized tissues and by problems arising in the context of medical imaging techniques, 
where the estimation of effective (i.e., macroscale) material properties is affected by the presence of microscale structures and
microscale dynamics, such as fluid flow in the vasculature. 
We propose a method where the background solid material and the active slender inclusions are discretized independently, imposing the required interface conditions via non-matching Lagrange multipliers. 
The intrinsic geometrical complexity of the resulting computational model is simplified by relying on a reduced Lagrange multiplier framework, where the functional space of the Lagrange multiplier is replaced by the tensor product between an infinite dimensional Sobolev space defined on a lower-dimensional characteristic set of co-dimension two, and a finite dimensional space defined on the cross-sections of the inclusions.
In view of the coupling with one-dimensional blood flow models, we derive a non-standard boundary condition that enforces a local deformation on the solid-fluid boundary, and we present the details of its stability analysis in the continuous elasticity setting.
The method is validated with different numerical examples in two and three dimensions, assessing its convergence properties and its potential
for the in silico characterization of tissues samples.

\paragraph{keywords}{Finite element method, linear elasticity, multiscale methods, model reduction, immersed interfaces, Lagrange multipliers}


\section{Introduction}

This paper focuses on the computational multiscale modeling of elastic materials. The topic is motivated by applications in the context of biological tissue imaging, such as multiparametric MRI with diffusion-weighted imaging \cite{jajamovich-etal-2014,lewin-etal-2007}, or magnetic resonance elastography (MRE) \cite{muthupillai_1995,sack-bioqic-18,sack-2023-review}, where tissue data and images are combined with physical and computational physical models to estimate mechanical and biophysical properties.
These \textit{inverse} problems are extremely challenging for multiple reasons. 
On the one hand, the underlying forward problems require the handling of the interaction between the solid matrix and the fluid vasculature. Besides the complexity of handling numerically
the fluid-solid coupling at the interface, fully resolved models are extremely prohibitive due to the geometrical complexity at the small scales of vascular structures.
On the other hand, especially in the context of medical images, data are typically available only at relatively coarse scales (the typical resolution of MRI is of the order of millimeters).
To bridge the gap between the required model complexity and
available data resolution, tissue models based on linearized elasticity and poroelasticity are commonly used in the context of 
medical imaging to characterize mechanical and constitutive \textit{effective} parameters.

However, modeling the smaller scales cannot be neglected if the characterization of fluid phase is of interest for the final application. 
An example is the possibility of characterizing  the effect on macroscopic-biophysical parameters of the variations of the fluid pressure within the tissue\cite{lilaj_etal_2021a,JAITNER2025312}.
Simplified coarse-scale models assuming homogeneous tissue properties might 
lead to inaccurate estimations. A concrete example on the sensitivity of liver tissue parameters to intrinsic poroelastic properties and vascular architecture has been
recently presented and discussed in the experimental study presented by Safraou et al. \cite{safraou-etal-2023}, investigating
the influence of static portal pressure on liver stiffness (see also \cite{millonig-etal-2010,palaniyappan-etal-2016}).
It is thus necessary to take into account
multiscale descriptions, in which \textit{effective} macroscale tissue dynamics intrinsically account for the small scales via suitable surrogate models, i.e.,  capable of retaining the details of the microscale vasculature.

Our approach is based on \textit{mixed-dimensional} partial differential equations, namely, modeling  a tissue sample is modeled as a linear, elastic matrix coupled to an arbitrary vascular fluid structures of co-dimension two (i.e., 3D-1D or 2D-0D).
The assumption behind the usage of mixed-dimensional methods is that 
part of the physics can be sufficiently well approximated on a lower-dimensional manifold, e.g., using one-dimensional blood flow models.
This type of models has been first analyzed in \cite{dangelo-quarteroni-2008} in the context of diffusion, and later on applied also to perfusion, porous media flows
\cite{cattaneo-2014a,cattaneo-2014b,koch-2022}, as well as 
elasticity\cite{heltai-caiazzo-2019,heltai-caiazzo-mueller-2021}.

The model considered in this paper is inspired by the one presented
by Heltai et al.\cite{heltai-caiazzo-2019,heltai-caiazzo-mueller-2021}, in which the coupling was implemented via a singular forcing term in the elasticity equation, but restricted to Neumann-like boundary condition for the solid model,
computed from an asymptotic approximation of a local analytical solution.
The goal of this work is to proposed, analyze, and validate a 
mixed-dimensional model for general coupling conditions between tissue and fluid, as a further step towards the treatment of fully coupled 3D-1D fluid-structure interaction problems.

We handle the coupling using a non-matching immersed method based on
a Lagrange multipliers (LM) approach\cite{babuska-1973,bramble-1981}. 
In this method the variational formulation is defined based on a minimization problem equivalent to the original PDE, in which the boundary or coupling conditions are imposed weakly as a set of constraints, using functional spaces defined on the interfaces.
LM formulations play an important role in the numerical solution of partial differential equations using the finite element method,
in particular in the context of coupled multiphysics and multiscale models (see, e.g., \cite{boffi-2021,boffi-2022,franceschini-2021}
for some recent examples). 
Recent LM formulations for mixed-dimensional material models were  proposed, e.g., in \cite{AlzettaHeltai-2020-a},
considering the coupling of a three-dimensional bulk mechanical problem with one-dimensional fiber structures. 
Preliminary stability results for Dirichlet\--Neumann coupling conditions on mixed--dimensional (3D\--1D) problems using LM
were recently presented by Kuchta et al.\cite{Kuchta2020}, while following related works discussed suitable preconditiong strategies\cite{Kuchta2019375,Budisa2022}.
In the context of multiscale methods, an application of 
LM to mixed-dimensional methods for a Localized Orthogonal Decomposition methods (LOD) has been recently
proposed in  \cite{altmann-verfuert-2021} for the coupling of bulk (2D) and surface (1D) problems, considering the numerical homogenization of the dynamics on the one-dimensional manifold and enforcing the resulting 
interface conditions with LM.

The model proposed in this work builds on the \textit{reduced} Lagrange multipliers framework recently described by Heltai \& Zunino\cite{LHPZ}, which addresses the systematic construction of dimensionality reduction operators that satisfy the inf-sup condition for constraints for scalar diffusion problems. This approach uses weighted extension and restriction operators for slender domains, leveraging suitable projections onto lower-dimensional manifolds to build reduced infinite-dimensional spaces for the Lagrange multiplier, ensuring controlled approximation properties and enhanced computational efficiency.
Namely, under the assumption of slender (cylindrical) vessels with mostly one-dimensional dynamics, the centerline of the vessels is considered as a representative lower dimensional manifold, and the dimension reduction is achieved by approximating the space of Lagrange multipliers on the two-dimensional vessel boundary with a collection of infinite dimensional Fourier modes on the \textit{one-dimensional} center-line of the vessel.

The main contributions of this work is the extension of the theory developed for scalar fields~\cite{LHPZ} to the particular case of vector-valued fields representing local deformations around one-dimensional manifolds and of multiscale elasticity. This generalization is non-trivial and requires a careful handling of the boundary condition when considering the 
reduced-dimensional space. We focus on the case of axis-symmetric displacement
imposed on the surrounding tissue by a vessel expansion or contraction, which is the
setting of interest when the boundary condition comes from a one-dimensional
model. 
We discuss the reduced-order formulation from the theoretical and practical points of view, and we show how to design a reduced-dimensional space of Lagrange multipliers that can be used to enforce local coupling conditions in a computationally efficient way preserving the stability of the overall formulation at the continous level.
A further contribution is the validation of the proposed framework for the 
modeling of in silico modeling of vascular tissues and for the estimation of 
effective properties. 
Up to our knowledge, this is the first model that
allow for handling dynamics of one-dimensional vessels into
three-dimensional elastic models, and it represents therefore
an important preliminary result for the study of numerical upscaling techniques, for the development of 
fully coupled schemes, and for the solution of multiscale inverse problems for the estimation of tissue properties.

The rest of this paper is organized as follows.
Section \ref{sec:model-problem-preliminary} introduces the  mathematical model from an abstract perspective, 
while Section \ref{sec:model} 
describes the considered mixed-dimensional elasticity problem and its variational formulation.
The reduced Lagrange multiplier formulation 
is introduced and analyzed in Section \ref{sec:reduced}. The numerical results are presented and discussed in Sections \ref{sec:numerics}, while Section \ref{sec:conclusions} draws the concluding remarks.

\section{Mathematical Model}
\label{sec:model-problem-preliminary}

To model the dynamics of a vascularized tissue, let us consider a (Lipschitz) domain $\Omega \subseteq \mathbb{R}^d$, 
containing an elastic material and a set 
$V := \cup_{i=1}^m V_i$ of (possibly disconnected) fluid-filled inclusions $V_i$, $i=1,\hdots,m$, whose boundary will be denoted by $\Gamma := \partial V = \cup_{i=1}^m \partial V_i$.
We address a heterogeneous dimensional coupling, where the physics of the solid and of the fluids are represented by models of different dimensions, i.e., 
a three-dimensional elastic model for the tissue matrix:
\begin{subequations}
\label{eq:abstract-model-problem}
\begin{align}
\label{eq:abstract-model-problem-a}
-\Div (\sigma(\ub)) &= \boldsymbol{f} && \text{ in } \Omega \setminus V \\ \label{eq:abstract-model-no-slip}
\ub - \ub_V(\ub, \phi) &= 0 && \text{ on } \Gamma\\
		\label{eq:abstract-model-problem-c}
\ub & = 0 && \text{ on } \partial \Omega
  \end{align}
  \end{subequations}
and a one-dimensional vessel fluid dynamics in the vessel:
\begin{subequations}
\label{eq:abstract-model-problem-fluid}
\begin{align}
\mathcal{F}(\ub, \phi) & = 0 && \text{ in } V\\
		\label{eq:abstract-model-transmission} 
		\sigma(\ub)\cdot \nb + \sigma_V(\ub, \phi) \cdot \nb_V &= 0 && \text{ on } \Gamma,
  \end{align}
\end{subequations}
This work focuses on the numerical
treatment of the elastic problem.
Thus, for clarity of presentation, we will assume in the following that the vessel dynamics is defined by an abstract operator $\mathcal F$
and by a state variable 
$\phi$.
The function $\ub_V$ represents the displacement of the vessels wall (fluid-solid interface), and $\sigma_V$ is the vessels' stress tensor on the interface. 
Both are assumed to be functions of the displacement $\ub$ and of the vessels state $\phi$.
The function $\boldsymbol{f}$ represents the external forces acting on the elastic material.

When coupling the elasticity and the fluid dynamics problems, we first need to understand what are the relevant variables and how they are related to the deformation of the solid matrix. 
In the context of one-dimensional blood flow models~\cite{MullerBlancoPablo}, the vessel wall is usually modeled as an elastic shell that can deform only in the axial direction 
in response to the pressure difference between the inside and outside of the vessel, and by the flow rate.
No information is typically available on the \emph{spatial location} of the vessels' centerline, which, in these models, is not considered. 
When coupling such a dynamic with the surrounding elasticity problem, special care must be taken to ensure that the coupling conditions are consistent with the one-dimensional physical model and that they do not introduce spurious boundary conditions on the solid matrix. In particular, we need to make some assumptions on how the centerline of the vessel is modified. The most reasonable approach is to impose that, on average, the centerline follows the same rigid deformation of the surrounding solid matrix on $\Gamma$, with the addition of a deformation along the normal direction induced by the one-dimensional model.

Although considering only the radial deformation, i.e., $\ub_V = g \nb$, may seem adequate for most needs (e.g., when the elastic matrix is mostly at rest and for small displacements $\ub$), if used directly in ~\eqref{eq:abstract-model-no-slip} it imposes a spurious homogeneous Dirichlet boundary condition that  \textit{pins} the centerline of the vessel to its current position and rotation. In order to avoid this effect, we need to subtract the average rigid motions of the vessel from the displacement field $\ub_V$ used in the local coupling condition.
This approach results into a coupling condition of the form
\begin{equation}
\label{eq:radial-deformation}
\ub_V(\ub, \phi) := C \ub + \delta R(\phi) \nb \qquad \text{ on } \Gamma,
\end{equation}
where $C$ is an operator that averages the rigid motions of field $\ub$ over the cross section of the vessel, and will be formally introduced in  Section~\ref{sec:model}, and
$\delta R$ is a function that describes the radial deformation of the vessel wall, and it is the input coming from the one-dimensional model.

In the simplified scenario of a unidirectional coupling, i.e., when the fluid equations \eqref{eq:abstract-model-problem-c} are solved separately, and their solution used as a boundary condition for the elasticity problem) one can either use the coupling conditions provided by \eqref{eq:abstract-model-transmission} (a Neumann boundary condition for the elasticity problem), as demonstrated  by Heltai \& Caiazzo~\cite{heltai-caiazzo-2019}, or apply those in \eqref{eq:abstract-model-no-slip} (a Dirichlet boundary condition for the elasticity problem). In the former approach, the vessel displacement is disregarded in the vessel dynamics (i.e., $\ub_V(\phi, \ub)=\ub$ is not enforced explicitly), whereas in the latter, one ignores the elastic stresses, and the vessel displacement is used as a Dirichlet boundary condition for the elasticity problem.
A full coupling can still be achieved by iterating over the two problems in a staggered fashion, i.e., by solving the fluid problem with the current elastic deformation, and then solving the elasticity problem with the current fluid variables until convergence. 

We address the solution of problem \eqref{eq:abstract-model-problem} in the framework of the fictitious domain method, where the elastic model is extended across the entire domain $\Omega$ and the condition~\eqref{eq:abstract-model-no-slip} is enforced directly through a Lagrange multiplier, %
seeking the function $\ub_V$ to ensure that the transmission condition \eqref{eq:abstract-model-transmission} is satisfied (at least in a weak sense) on the interface $\Gamma$, eventually through an iterative process.
In the case os slender vessels, further model-order reduction techniques
can be used to simplify the numerical discretization, 
and adopting a one-dimensional flow model becomes particularly advantageous from a computational perspective~\cite{MullerBlancoPablo}. 

\section{The mixed-dimensional elasticity problem}
\label{sec:model}
\subsection{Preliminaries}
Let us consider, for the sake of simplicity, a domain with a single vessel $V$.
In the setting introduced above, let us
introduce a lower-dimensional manifold $\gamma$ (of co-dimension 2) describing
the centerline of $V$, i.e., a line in 3D and a point in 2D (see Figure \ref{fig:domain-sketch}, left).

Let $\Gamma = \partial V$. We assume that there exists a geometrical projector operator 
\begin{equation}
\label{eq:definition-proj}
\begin{aligned}
& \proj :& \Gamma  &\to &&\gamma \\
\end{aligned}
\end{equation}
that maps uniquely the vessel boundary
$\Gamma$ onto the lower dimensional representative domain $\gamma$.
The inverse of the projection \eqref{eq:definition-proj}, 
$\proj^{-1}: \gamma \to \partitionspace(\Gamma)$,
maps a point $s \in \gamma$ on the associated \textit{cross-section}
\begin{equation}
\label{eq:Ds}
D(s) := \proj^{-1}(s) \subset \Gamma.
\end{equation}
In what follows, we assume that, for any $s$, the cross-sections have the same intrinsic dimension, and denote with $|D(s)|$ their intrinsic Hausdorff measure.

\begin{figure}[!h]
\centering   
\includegraphics[width=0.3\textwidth,trim=0cm 0cm 0cm 0cm,clip=true]{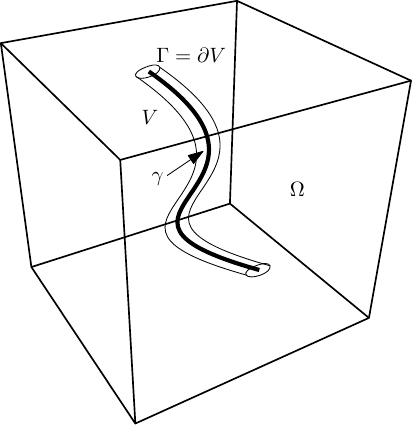}\hspace{1cm}
\includegraphics[width=0.55\textwidth,trim=0cm 0cm 0cm 0cm,clip=true]{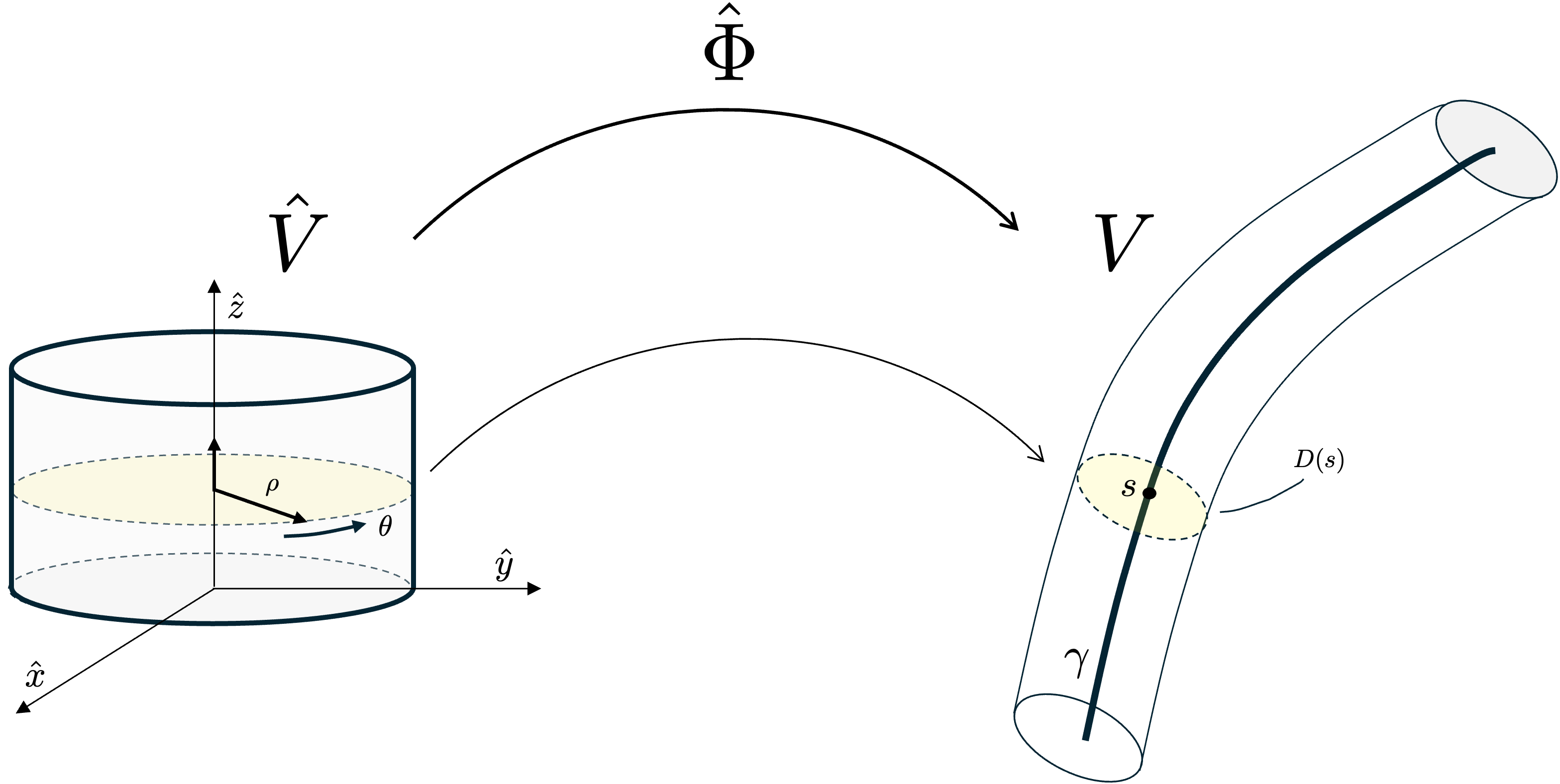}
\caption[Sketch of 3D-1D domain]{Left. Sketch of a sample domain $\Omega$ containing the elastic material and fluid vessel $V$ with boundary $\Gamma \equiv \partial V$. The vessel centerline is denoted by $\gamma$. Right. Each vessel is assumed to be isomorphic to a reference cylinder (unitary radius and height), so that, for each point $s \in \gamma$ on the centerline, it is possible to define a local cylindrical coordinate reference system.}
\label{fig:domain-sketch}
\end{figure}

Furthermore, for the analysis which will be presented in the upcoming sections, we assume that the vessel segment can be mapped into a reference cylinder $\hat V$ of unitary radius and height, whose axis lies parallel to $z$ (see Figure \ref{fig:domain-sketch}, right).
Namely, we assume that there exists an isomorphism 
$\hat \Phi: \hat V \to V$, where 
$\hat V = S^1$ in two dimensions and 
$\hat V = S^1 \times [0,1]$ in three dimensions, such that
\begin{equation}\label{ass:V}
\begin{aligned}
    & \hat \Phi(\hat V) = V \\
    & \hat \Phi \in  C^1\left( \,\overline{\hat V} \,\right), \,\hat\Phi^{-1} \in C^1(V)\\
    & \exists ~ J_{\rm min},J_{\rm max} \mid\,
	0 < J_{\rm min} \leq \text{det}
	\left(
	\nabla \Phi(\hat {\mathbf x})
	\right) \leq J_{\rm max}, \; \forall \hat{\mathbf x} \in \hat V\,.
\end{aligned}
\end{equation}
Moreover, we assume that the preimage 
of the centerline $\gamma$ coincides with the
$z$ axis, i.e.
\begin{equation}\label{ass:hat_gamma}
\hat \gamma = \hat \Phi^{-1}(\gamma)
= \begin{cases}
\{(0,0)\} & d=2 \\
    \{0\} \times \{ 0 \} \times [0,1] & d=3\,.
\end{cases}
\end{equation}
The above assumptions allow to introduce for
each centerline point $s \in \gamma$, a local cylindrical coordinate reference system $(\rho(s),\theta(s),s)$, by considering the preimage on the reference cylinder.

We introduce the following notations for standard Sobolev spaces:
\begin{align*}
	&\spaceV \equiv H^1_0(\Omega)^d, & &\spaceV' \equiv H^{-1}(\Omega)^d,\\
	&\spaceQ \equiv H^{-\frac12}(\Gamma)^d, 
	& &\spaceQ' \equiv H^{\frac12}(\Gamma)^d,\\
	&\spaceW \equiv H^{\frac12}(\gamma), & &\spaceW' \equiv H^{-\frac12}(\gamma)\,,
\end{align*}
as well as the classical trace operator $\traceOp: \spaceV\mapsto \spaceQ'$.

In view of \eqref{eq:abstract-model-problem} and of the condition \eqref{eq:radial-deformation}, we consider the following linear elasticity problem
\begin{subequations}
	\label{eq:model-problem}
	\begin{align}
		\label{eq:model-problem-a}
		-\Div (\sigma(\ub)) &= \boldsymbol{f} && \text{ in } \Omega\setminus V\\
		\label{eq:model-problem-b}
		\ub &= 0 && \text{ on } \partial \Omega \\
		\label{eq:model-problem-c}
		\ub - C\ub
		&= \boldsymbol{g} && \text{ on } \Gamma = \partial V \,,
	\end{align}
\end{subequations}
where the Cauchy stress tensor $\sigma$ is  the usual linear elasticity tensor
\begin{equation}
	\sigma(\ub) := \mu \nabla \ub + \lambda \Div \ub \Id,
\end{equation}
and $\mu$ and $\lambda$ are the first and second Lam\'e parameters. 
The source term $\boldsymbol{g}$
models a coupling with a one-dimensional flow model (radial deformation), and it is assumed to be directed normal to the vessel boundary, i.e., $\boldsymbol{g}:=g_{\nb} \nb = -\delta R(\ub, \phi)\nb$.

To model the
interface condition 
\eqref{eq:radial-deformation}, 
we define the average operator
\begin{multline}
	C \ub(\mathbf x) := \left(\frac1{|D(s)|}\int_{D(s)} \ub \diff{D(s)}\right) + \\
	\left(\frac1{|D(s)|}\int_{D(s)} \ub \cdot \boldsymbol{\tau} \diff{D(s)}\right)  \, \boldsymbol{\tau}(\mathbf x)\qquad, \forall \ub \in C^\infty_c(\Omega)^d\,
	\label{eq:operator_C},
\end{multline}
where, for $\mathbf x \in \Gamma$, $s = \Pi(\mathbf x)$ and $\boldsymbol{\tau}$ is a tangent vector, orthogonal to the normal vector $\nb = \nb(\mathbf x)$ and co-planar with $D(s)$.

\subsection{Variational formulation}
We now extend (continuously) the solution $\ub$ to the entire domain $\Omega$ by
considering the auxiliary, fictitious, problem 
inside $V$:
\begin{equation}\label{eq:el-fictitious}
	-\Div(\sigma(\ub)) = \tilde{\boldsymbol{f}},\; \mbox{in}\;V,
\end{equation}
where $\tilde{\boldsymbol{f}} \in L^2(\Omega)^d$ is an arbitrary extension of $\boldsymbol{f}$ in the
entire $\Omega$ and with boundary conditions that impose continuity of $\ub$ across $\Gamma$.

The corresponding weak formulation of the fictitious formulation of \eqref{eq:model-problem-a} is given by
\begin{equation}\label{eq:weak-extended}
	(\sigma(\ub), \nabla (\vb))_{\Omega} + \duality{\llbracket \sigma(\ub) \rrbracket \cdot \nb, \vb}_{\Gamma} = (f,\vb)_{\Omega},\ \forall \vb \in \spaceV 
\end{equation}
where 
$$
\llbracket \sigma(\ub) \rrbracket := \sigma(\ub)^+-\sigma(\ub)^-
$$
indicates the jump of $\sigma(\ub)$ along the
outgoing normal direction to $\Gamma = \partial V$.

Such procedure is standard in the literature of fictitious
domain methods (see, e.g., ~\cite{Glowinski1999,Glowinski1994}), and allows one to efficiently solve
Dirichlet problems on complex domains, possibly evolving in time, by
embedding them in simpler -- fixed -- domains.
With little abuse of notation, in what follows we will not distinguish  between $\boldsymbol{f}$ and its extension
$\tilde{\boldsymbol{f}}$.

We rewrite \eqref{eq:weak-extended}
imposing condition \eqref{eq:model-problem-c} 
through a Lagrange
multiplier. Namely, we seek $\ub \in \spaceV$, 
$\boldsymbol{\lambda} \in \spaceQ$,  such that
\begin{subequations}
	\label{eq:model-problem-weak}
	\begin{align}
		\label{eq:model-problem-weak-a}
		&(\sigma(\ub), \nabla(\vb))_{\Omega} + 
		\duality{(\traceOp^T - C^T )\boldsymbol{\lambda}, \vb}_{\Gamma}
		= (\boldsymbol{f},\vb)_{\Omega},
		\\
		\label{eq:model-problem-weak-b}
		&\duality{\traceOp\ub - C \ub,\qb}_{\Gamma} = \duality{\boldsymbol{g},\qb}_{\Gamma}\,,
	\end{align}
\end{subequations}
for all $\vb \in \spaceV$ and $\qb \in \spaceQ$.

\begin{remark}
We focus on a static linear elastic material. However, the methodology proposed here can be extended to arbitrarily complex or realistic stress models, as long as the coupling conditions are enforced through Lagrange multipliers.
\end{remark}

Let us define the following operators:

\begin{align}
	\label{eq:defA}
	A &:\spaceV \to \spaceV'
	\\
	\nonumber
	&\duality{A\ub,\vb}:= (\sigma (\ub), \nabla (\vb))_{\Omega}, \;\forall \ub,\vb \in \spaceV 
	\\
	\label{eq:operator_B} 
	B &: \spaceV \to {\spaceQ}'
	\\
	\nonumber
	&\duality{B\ub, \,\qb} := \duality{(\traceOp - C) \ub, \,\qb}_\Gamma, \; \forall \ub \in \spaceV, \forall q \in \spaceQ
	\\
	\label{eq:operator_BT}
	B^T &: \spaceQ \to \spaceV'
	\\
	\nonumber
	&
	\duality{B^T \qb,\, \vb} :=\duality{\left(\traceOp^T  - C^T\right) \qb, \,  \vb}_\Gamma, \;\forall \vb \in \spaceV, \forall q \in \spaceQ
\end{align}

Let us now denote the kernels of $B$ and $B^T$, respectively, as
\begin{equation}\label{eq:kerBt-1}
	\begin{aligned}
		\spaceVz := & \ker(B) 
		 = \left\{ \vb \in \spaceV \mid 
		\duality{\mathbf q, \traceOp \vb}_{\Gamma}= \duality{\mathbf q, C \vb}_\Gamma, 
		\forall \qb \in \spaceQ
		\right\}
		,\\
		\spaceQz := &\ker(B^T) 
		 = \left\{ \qb \in \spaceQ \mid 
		\duality{\mathbf q,\traceOp\vb}_{\Gamma}= \duality{\mathbf q, C \vb}_\Gamma, 
		\forall \vb \in \spaceV
		\right\}
		\,.
	\end{aligned}
\end{equation}

In order for the weak formulation \eqref{eq:model-problem-weak} to be
well-posed, the space of the Lagrange multipliers shall be restricted to
$\spaceQo := \spaceQ \setminus \spaceQz$. Using the notations \eqref{eq:defA},
\eqref{eq:operator_B}, and \eqref{eq:operator_BT}, we thus consider the
following problem:

\noindent Let $f\in\spaceV',\,g\in {(\spaceQo)}'$. Find $u\in\spaceV,\,\boldsymbol{\lambda}\in\spaceQo  $ such that
\begin{subequations}\label{eq:model-problem-weak-abstract}
	\begin{align}
		\label{eq:model-problem-weak-abstract-a}
		&\duality{A\ub,\vb} + \duality{B^T \boldsymbol{\lambda}, \vb} = \duality{\boldsymbol{f},\vb} && \forall \vb \in \spaceV
		\\ \label{eq:model-problem-weak-abstract-b}
		&\duality{B\ub,\qb} = \duality{\boldsymbol{g},\qb} && \forall \qb \in \spaceQo.
	\end{align}
\end{subequations}

Problem \eqref{eq:model-problem-weak-abstract} can be seen as a generalization
of the framework recently described by Heltai \& Zunino\cite{LHPZ} to the case
of a vector-valued problem. The well-posedness of the formulation 
considering a suitable subspace of $\spaceQo$ for the Lagrange multipliers,
will
be addressed in its reduced form of the problem in the upcoming Section 
\ref{sec:reduced}. However, the existence of an inf-sup constant also for 
the full version \eqref{eq:model-problem-weak-abstract} can be shown following
the proof in the scalar case \cite{LHPZ}.

\begin{corollary}[Stresses evaluation]
	From the extended problem \eqref{eq:weak-extended} 
	and from \eqref{eq:model-problem-weak},
	it follows that the Lagrange multiplier satisfies
	\begin{equation}\label{eq:lagrange-multiplier}
		\duality{B^T \boldsymbol{\lambda},\vb}=\duality{(\traceOp^T-C^T)\boldsymbol{\lambda}, \vb} =
		\duality{\llbracket \sigma \rrbracket \cdot \nb, \vb},\; \forall \vb \in \spaceV\,.
	\end{equation}
\end{corollary}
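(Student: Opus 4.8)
The plan is to obtain the result by directly comparing the two weak identities that the excerpt has already established: the fictitious-domain identity \eqref{eq:weak-extended} and the first saddle-point equation \eqref{eq:model-problem-weak-a}. Both are posed over the same test space $\spaceV = H^1_0(\Omega)^d$, and — this is the point that must be pinned down — both are satisfied by the \emph{same} displacement $\ub$. In \eqref{eq:weak-extended} the field $\ub$ is the continuous extension of the physical solution obtained by solving the fictitious interior problem \eqref{eq:el-fictitious}, and by construction this is exactly the displacement component of the pair $(\ub,\boldsymbol{\lambda})$ whose existence and uniqueness are guaranteed by Theorem \ref{theo:infsup-cont}. The corollary is therefore an interpretation statement rather than a solvability claim, and no inf-sup or coercivity argument is needed.

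First I would record that the bulk bilinear form $(\sigma(\ub),\nabla\vb)_{\Omega}$ enters \eqref{eq:weak-extended} and \eqref{eq:model-problem-weak-a} in identical form, and that their right-hand sides coincide once $\boldsymbol{f}$ is identified with its $L^2(\Omega)^d$-extension $\tilde{\boldsymbol f}$, following the convention adopted after \eqref{eq:weak-extended}. Next I would subtract \eqref{eq:model-problem-weak-a} from \eqref{eq:weak-extended}: the volume terms and the forcing terms cancel identically, leaving
\[
\duality{(\traceOp^T-C^T)\boldsymbol{\lambda},\vb}_{\Gamma} = \duality{\llbracket \sigma(\ub)\rrbracket\cdot\nb,\vb}_{\Gamma} \qquad \forall \vb \in \spaceV,
\]
which is precisely \eqref{eq:lagrange-multiplier}. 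This is the entire mechanism of the proof; everything reduces to the algebraic cancellation of two already-valid identities.

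The only delicate point — and hence the main (if mild) obstacle — is justifying that the two occurrences of $\ub$ really denote the same object and that each boundary pairing is individually well defined. For the well-definedness, I would note that on each side of $\Gamma$ the stress $\sigma(\ub)$ belongs to $H(\mathrm{div})$, so its normal trace, and hence the jump $\llbracket\sigma(\ub)\rrbracket\cdot\nb$, lies in $\spaceQ = H^{-1/2}(\Gamma)^d$ and pairs with $\traceOp\vb\in\spaceQ'$ in the duality between $\spaceQ$ and $\spaceQ'$; the term $(\traceOp^T-C^T)\boldsymbol{\lambda}$ is controlled by the boundedness of $C = \ext^0\avg^0\traceOp$ established in Lemma \ref{lemma:extavg-bound}. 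For the identification of $\ub$, I would appeal to the construction preceding \eqref{eq:model-problem-weak}: the saddle-point system is derived from \eqref{eq:weak-extended} precisely by replacing the unknown stress-jump functional $\llbracket\sigma(\ub)\rrbracket\cdot\nb$ with the Lagrange-multiplier representation $(\traceOp^T-C^T)\boldsymbol{\lambda}$ while enforcing the constraint $\traceOp\ub - C\ub = \boldsymbol{g}$, so the displacement is unchanged between the two formulations. The corollary then simply records the physical meaning of $\boldsymbol{\lambda}$ as the (reduced) interface traction transmitted across $\Gamma$.
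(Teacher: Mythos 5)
Your proposal is correct and matches the argument the paper intends (the corollary is stated without an explicit proof, precisely because it follows by the comparison you describe): both \eqref{eq:weak-extended} and \eqref{eq:model-problem-weak-abstract-a} hold for the same displacement over the same test space $H^1_0(\Omega)^d$, so subtracting them cancels the volume and forcing terms and leaves exactly \eqref{eq:lagrange-multiplier}. Your additional remarks on the well-definedness of the boundary pairings and the identification of the two occurrences of the displacement are sound and only make explicit what the paper leaves implicit.
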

The relation \eqref{eq:lagrange-multiplier} can be used to recover the solid stresses 
on the fluid boundary from the Lagrange multiplier.

\section{Reduced Lagrange multiplier formulation}
\label{sec:reduced}

In this Section, we apply to problem
\eqref{eq:model-problem-weak-abstract} 
the recently introduced reduced Lagrange multiplier framework\cite{LHPZ}, adapting the original
formulation to the case of linear elasticity in $\mathbb R^d$
and taking into account the boundary
condition \eqref{eq:model-problem-c}.
The approach is based on constructing a lower-dimensional space of Lagrange multipliers adapted to 
the geometrical and physical settings of the problem, exploiting the characteristic of how inclusions are modeled and handled.

\subsection{Reduced basis functions}\label{ssec:reduced_basis}

For a given finite set of indices $I \subset \mathbb N$, let us consider a set of functions
$\varphi_i \in H^1(\Gamma) \cap C^0(\overline{\Gamma})$, for $i \in I$,
satisfying, for each $s \in \gamma$\cite{LHPZ},
\begin{equation}\label{eq:phi-i-phi-j}
	\int_{D(s)} \varphi_i \varphi_j \, dD(s) = 0,\; \mbox{for}\; i\neq j,
\end{equation}
i.e., such that $\varphi_i$ and $\varphi_j$ ($i\neq j$) are orthogonal with 
respect to the standard $L^2$ product in each cross section $D(s)$,
and 
\begin{equation}\label{eq:normphi-D}
	\norm{\varphi_i}_{L^2\left(D(s)\right)} = \sqrt{|D(s)|},\;\forall i\in I\,.
\end{equation}

To define the functions $\varphi_i$ we use assumptions 
\ref{ass:V} and \ref{ass:hat_gamma} which relates the vessel interface
to the boundary of a reference cylinder. 

Namely, we start by considering the following definitions in cylindrical coordinates
\begin{equation}
\begin{aligned}
	\label{eq:def-of-varphii}
\hat \varphi_0 (\rho,\theta) &= 1 \\
	{\hat \varphi}_{2k-1} (\rho,\theta) &= 
     \sqrt{2} %
     \rho^k \cos{\theta k},\\
	{\hat \varphi}_{2k} (\rho,\theta) &= 
    \sqrt{2} %
    \rho^k \sin{\theta k},
\end{aligned}
\end{equation}
for $k=1,2,\hdots$, and define 
\begin{equation}\label{eq:phi_x}
\varphi_i (\mathbf x) = \hat \varphi_i (\rho(\mathbf x), \theta(\mathbf x)), \forall i,
\end{equation}
where $\rho(\mathbf x)$
and $\theta(\mathbf x)$
are the cylindrical coordinates of the point $\mathbf x$ in the reference cylinder.
The above definitions satisfy properties \eqref{eq:phi-i-phi-j} and \eqref{eq:normphi-D}
by construction.

A necessary step to define the dimensionality reduction operator is
the characterization of operators that
relate functional spaces on domains of different dimensions.
For each point $\mathbf x \in \Gamma$, 
let us consider the unit vectors $\be_{\alpha}$, 
$\alpha \in \left\{0,d\right\}$, defined in the local Cartesian reference system in 
$s=\Pi(x)$ (with the $z$-axis defined by the axis of the reference cylinder).
For each $i \in I$, we introduce then the average and extension operators
\begin{subequations}\label{eq:AiEi}
	\begin{align}
		\avg^{\alpha,i} :& \spaceQ' &&\to &&\spaceW
		\\
		\nonumber
		&%
		\qb && \mapsto&& \frac1{|D|}\int_{D} \varphi_i\, \be_{\alpha} \otimes \be_{\alpha}\, \qb \diff{D}
		\\
		\ext^{\alpha,i} :& \spaceW &&\to && \spaceQ'
		\\
		\nonumber
		&%
		\wb &&\mapsto&&  \varphi_i \, \be_{\alpha} \otimes \be_{\alpha} \, \wb \circ \Pi,
	\end{align}
\end{subequations}
where $\otimes$ is the outer product, i.e., $(\be_{\alpha} \otimes \be_{\alpha}) \wb := (\wb\cdot \be_{\alpha})\be_{\alpha}$. 

In the case of a scalar problem, the definitions in \eqref{eq:AiEi} naturally extend those proposed by Heltai \& Zunino\cite{LHPZ}.
The boundedness and linearity of $\ext^{\alpha, i}$ and $\avg^{\alpha, i}$ can be also directly derived from the scalar case\cite{LHPZ,Kuchta2021}.
Moreover, from the definitions \eqref{eq:AiEi}, and using \eqref{eq:phi-i-phi-j} and \eqref{eq:normphi-D}
one obtains the orthogonality of the operators
\begin{equation}\label{eq:AiEj_orthogonality}
	\avg^{\alpha,i} \ext^{\beta,j} \wb
	= \delta^{\alpha \beta}\delta^{ij} \, \be_{\alpha} \otimes \be_{\beta} \wb, \qquad \forall \wb \in \spaceW ,
\end{equation}
where $\delta^{ij}$ denotes the Kronecker delta.

Using \eqref{eq:def-of-varphii}, \eqref{eq:phi_x}, and \eqref{eq:AiEi} for each $s \in \gamma$, and working in the local reference system, we can identify the normal and tangential vectors in each point $\boldsymbol{x} \in D(s)$ as 
\begin{align}\label{eq:n-tau}
\boldsymbol{n}(\boldsymbol{x}) & = \rho(\mathbf x)\cos{\theta(\mathbf x)} \,
\be_0  + \rho(\mathbf x) \sin{\theta(\mathbf x)} \,\be_1 =
\left(\ext^{0,1} \underline{\eta} + \ext^{1,2} \underline{\eta} \right), \\
\boldsymbol{\tau} (\boldsymbol{x}) & = \ext^{0,2} \underline{\eta} - \ext^{1,1} \underline{\eta}\,,
\end{align}
where the function $\underline{\eta}$ is a constant vector, taking
the value $\frac1{\sqrt 2}$ along the components
$\be_0$ and $\be_1$.

Consequently, the operator $C$ defining the
boundary condition in \eqref{eq:operator_C} can be expressed in terms of average and extensions operators as
\begin{multline}\label{eq:definitionC}
	C \ub = \underbrace{\ext^{0,0} \avg^{0,0} (\traceOp \ub) +\ext^{1,0} \avg^{1,0} (\traceOp \ub) +
		\ext^{2,0} \avg^{2,0} (\traceOp \ub)}_{\frac1{|D(s)|}\int_{D(s)} \ub \diff{D(s)}} 
    + \\ 
		\underbrace{\ext^{0,2} \avg^{0,2} (\traceOp \ub)
    -\ext^{1,1} \avg^{1,1} (\traceOp \ub)}_{\left(\frac1{|D(s)|}\int_{D(s)} \ub \cdot \boldsymbol{\tau} \diff{D(s)}\right)  \, \boldsymbol{\tau}(\mathbf x)},
\end{multline}
where we also used the orthogonality properties
\eqref{eq:AiEj_orthogonality}.

Let us now consider, for a given integer $N \geq 2$, 
the following space
\begin{multline}\label{eq:WN-def}
\spaceWLagr :=
 \{ \{\wb_i\}_{i = 1}^{i = N}, \wb_i \in [H^{1/2}(\gamma)]^d, \, \forall i =1, \hdots, N \, \\
 \text{ s.t. }  [\wb_1]_y = 0 \text{ and } [\wb_2]_x = 0 \text{ and } [\wb_2]_z = [\wb_1]_z = 0 \} \subset \spaceW^N,
\end{multline}
i.e., a subspace of $\spaceW^N$with an additional constraint motivated by the relation \eqref{eq:definitionC}.
According to \eqref{eq:WN-def}, the space 
$\spaceWLagr$ can be seen as a tensor product of $d^{(N)}:= d(N-2)+2$ copies of a functional space on $\gamma$.

The goal is to define a reduced formulation of 
problem \eqref{eq:model-problem-weak-abstract}, i.e., via a \textit{reduction} operator
\[
R: \spaceQ' \to (\spaceWLagr)' ,
\]
from the \textit{full} space $\spaceQ'$ of functions
defined on the inclusion boundary,
onto the \textit{reduced} space $(\spaceWLagr)'$.
We define $R$ through the adjoint operator 
\begin{equation}\label{eq:RT}
	\begin{aligned}
		R^T: &\spaceWLagr && \to && \spaceQ
		\\
		& \left( \left[\begin{array}{c}
		w_1\\ 0 \\0
	\end{array}
	\right],
    \left[\begin{array}{c} 0 \\ w_2 \\0
	\end{array}
	\right],\wb_3, \hdots,\wb_N \right) && \mapsto && 
		\ext^{0,1} \wb_1 + \ext^{1,2} \wb_2 \\
		& && && + \sum_{i=3}^N \left(\ext^{0,i} \wb_i + \ext^{1,i} \wb_i\right)
		\\
		& && && \in \spaceQ'  \subset \spaceQ.
	\end{aligned} 
\end{equation}

The reduced formulation of 
problem \eqref{eq:model-problem-weak-abstract} can be now written as: 
given $f\in\spaceV',\,\boldsymbol{g}\in\spaceQ'$ find $\ub\in\spaceV,\,\boldsymbol{\Lambda}\in\spaceWLagr$ such that
\begin{subequations}\label{eq:model-problem-weak-abstract-reduced}
	\begin{align}
		\label{eq:model-problem-weak-abstract-reduced-a}
		&\duality{A\ub,\vb} + \duality{B^T R ^T \boldsymbol{\Lambda}, \vb} = \duality{\boldsymbol{f},\vb} && \forall \vb \in \spaceV\,,
		\\
		\label{eq:model-problem-weak-abstract-reduced-b}
		&\duality{R B\ub,\overline{\wb}} = \duality{R  \boldsymbol{g},\overline{\wb}} && \forall \overline{\wb} \in \spaceWLagr\,.
	\end{align}
\end{subequations}

\begin{remark}
    The reduction only acts on the inclusion and the boundary conditions (the operator $B$ in \eqref{eq:model-problem-weak-abstract}), leaving unchanged the elasticity model in the
tissue domain. 

\end{remark}
\subsection{Stability analysis}

The variational formulation \eqref{eq:model-problem-weak-abstract-reduced} is well-posed only if the space of Lagrange multipliers is properly chosen. 
The advantage of the framework introduced in Section \ref{ssec:reduced_basis} and of the operator defined in \eqref{eq:RT} is not only related to the dimensional reduction, but also to the fact that 
the resulting lower-dimensional space naturally ensures the well-posedness.

\begin{lemma}\label{lemma:RT}
The operator $R^T: \spaceWLagr \to \spaceQ$
 is injective. Moreover, it holds
\begin{equation}\label{eq:CuRT}
		\duality{R^T\overline{\wb},C \vb}_{\Gamma} = 0, \; \forall ~ \overline{\wb} \in \spaceWLagr,\; \forall~  \vb \in \spaceV 
	\end{equation}
\end{lemma}
\begin{proof}
The injectivity follows from the orthonormality of $\varphi_1,\hdots,\varphi_N$  \eqref{eq:phi-i-phi-j}, since the image of $R^T$ is a sum of
extension operators $\ext^{\alpha,j}$ for different values of $j$.

We observe that  the composition of the extension and the average operator,
for the same index $i \geq 0$, corresponds to the projection
on the space generated, along the selected component, by the function $\{\varphi_i \}$, i.e.,
\begin{equation}\label{eq:AiEj_projection}
	\ext^{\alpha, i} \avg^{\alpha, i} \qb \in \text{Span}\{\varphi_i\boldsymbol{e}_{\alpha} \} \subset \spaceQ' \qquad \forall \qb \in \spaceQ'.
	\end{equation}
From the definition of the operator C \eqref{eq:definitionC} and from \eqref{eq:AiEj_projection} it follows that, 
for any $\vb \in \spaceV$, $C \vb$ is a projection on 
a space orthogonal to $R^T\left(\spaceWLagr\right)$, 
yielding \eqref{eq:CuRT}.
 
\end{proof}

The previous results ensures the well-posedness of the reduced formulation.
\begin{theorem}[Well-posedness of the reduced problem] \label{theo:infsup-RinfB}
	Let assume that the assumptions \ref{ass:V} are satisfied, and let $N \geq 2$ be given.
Then, the operator $R B: \spaceV \mapsto \spaceWLagr$ satisfies the inf-sup condition, i.e., there exists a positive real number $\beta_R>0$ such that
	\begin{equation}
		\infsup{\overline{\wb} \in\,\spaceWLagr}{\vb \in\spaceV} \frac{\duality{R B\vb, q}}{\norm{\vb}_{\spaceV} \norm{\overline{\wb}}_{\spaceWLagr}} \geq \beta_R > 0.
	\end{equation}
\end{theorem}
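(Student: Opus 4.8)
The plan is to deduce the inf-sup estimate from the equivalent bounded-below property of the transposed operator $(RB)^T = B^T R^T$. By the closed range theorem and the standard theory of saddle point problems \cite{Boffi2013}, the asserted inf-sup condition for $RB:\spaceV \to (\spaceW')^N$ is equivalent to the existence of $\beta_R>0$ with
\begin{equation*}
	\norm{B^T R^T \overline{\wb}}_{\spaceV'} \geq \beta_R\,\norm{\overline{\wb}}_{(\spaceW)^N}, \qquad \forall\, \overline{\wb} \in (\spaceW)^N.
\end{equation*}
Injectivity of $B^T R^T$ is already provided by Lemma \ref{lemma:RT}; the actual content is upgrading this to a uniform lower bound, i.e. showing that the range of $B^T R^T$ is closed.

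The key simplification I would exploit is that, on the range of $R^T$, the averaged-trace correction carried by $B^T$ disappears. Expanding the definition \eqref{eq:operator_BT} gives, for any $\overline{\wb}$ and any $\vb \in \spaceV$,
\begin{equation*}
	\duality{B^T R^T \overline{\wb}, \vb} = \duality{R^T \overline{\wb}, \traceOp \vb}_\Gamma - \duality{R^T \overline{\wb}, C\vb}_\Gamma,
\end{equation*}
and the second term vanishes identically by Lemma \ref{lemma:CuRT}, because $R^T \overline{\wb} \in \text{Span}\{\varphi_1,\hdots,\varphi_N\}$ is $L^2(D(s))$-orthogonal to the single constant mode $\varphi_0$ carried by $C\vb = \ext^0\avg^0\traceOp\vb$. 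Hence $B^T R^T = \traceOp^T R^T$ on $(\spaceW)^N$, and the whole estimate reduces to bounding $\traceOp^T R^T$ from below, which no longer involves the non-local operator $C$.

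To close the argument I would combine two quantitative facts. First, the trace operator $\traceOp$ is surjective onto $\spaceQ' = H^{\frac12}(\Gamma)^d$ and admits a bounded right inverse (trace/lifting theorem \cite{functional-analysis}), so its transpose is bounded below: there is $c_T>0$ with $\norm{\traceOp^T \qb}_{\spaceV'} \geq c_T \norm{\qb}_{\spaceQ}$, applied here to $\qb = R^T\overline{\wb}$ regarded as an element of $\spaceQ = (\spaceQ')'$ via the embedding $H^{\frac12}(\Gamma)^d \hookrightarrow H^{-\frac12}(\Gamma)^d$ (legitimate since the $\varphi_i$ are smooth). Second, $R^T$ is itself bounded below: applying $\avg^i$ to $R^T\overline{\wb}$ and using the orthonormality \eqref{eq:AiEj_orthogonality} recovers the $i$-th component, $\avg^i R^T\overline{\wb} = \wb_i$, so boundedness of the $\avg^i$ (Lemma \ref{lemma:extavg-bound} and its extension to $i\geq 0$) yields $\norm{\wb_i}_{\spaceW} \leq c\,\norm{R^T\overline{\wb}}_{\spaceQ}$ and, after summing over $i=1,\hdots,N$, a constant $c_1>0$ with $\norm{R^T\overline{\wb}}_{\spaceQ} \geq c_1\,\norm{\overline{\wb}}_{(\spaceW)^N}$. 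Chaining the two estimates gives the claim with $\beta_R = c_T\,c_1$. Equivalently, and more constructively, for given $\overline{\wb}$ one may lift $R^T\overline{\wb}$ through the trace to a displacement $\vb$ with $\norm{\vb}_{\spaceV} \leq C\norm{R^T\overline{\wb}}_{\spaceQ}$, for which $\duality{RB\vb,\overline{\wb}} = \duality{\traceOp\vb, R^T\overline{\wb}}_\Gamma$ is controlled from below, exhibiting the optimal test function directly.

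The main obstacle is precisely this passage from injectivity (already in hand) to the uniform lower bound / closed range, rather than any computation. The decoupling identity $B^T R^T = \traceOp^T R^T$ is what makes it tractable, since it replaces the awkward non-local operator $C$ by the plain trace, whose bounded-below transpose is the trace theorem, and the remaining near-isometry of $R^T$ is quantitative through \eqref{eq:AiEj_orthogonality}. A secondary technical point is the careful bookkeeping of the $H^{\frac12}/H^{-\frac12}$ duality between $\spaceQ'$ and $\spaceQ$, so that the lower bound for $\traceOp^T$ is applied in the correct norm and the boundedness of $\avg^i$ is invoked on the appropriate trace space.
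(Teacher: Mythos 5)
Your route is genuinely different from the paper's: the paper's proof simply notes that $RB$ is bounded and that $(RB)^T$ is injective (Lemma \ref{lemma:RT}), and then invokes standard saddle-point theory, leaving the passage from injectivity to a uniform lower bound implicit. You correctly identify that this passage is the actual mathematical content, and your two structural steps are sound: the decoupling identity $B^T R^T = \traceOp^T R^T$ via Lemma \ref{lemma:CuRT} is exactly right, and so is the lower bound $\norm{\traceOp^T \qb}_{\spaceV'} \geq c_T \norm{\qb}_{\spaceQ}$ coming from surjectivity of the trace onto $H^{\frac12}(\Gamma)^d$.

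The gap is in the final chaining, and it is a genuine one. The trace estimate controls $\norm{R^T\overline{\wb}}_{\spaceQ}$, i.e.\ the $H^{-\frac12}(\Gamma)^d$ norm. Your second estimate, however, comes from boundedness of $\avg^i:\spaceQ'\to\spaceW$, which only yields $\norm{\wb_i}_{\spaceW}\leq c\,\norm{R^T\overline{\wb}}_{\spaceQ'}$, that is, control by the \emph{stronger} $H^{\frac12}(\Gamma)^d$ norm; writing $\norm{R^T\overline{\wb}}_{\spaceQ}$ there does not follow from Lemma \ref{lemma:extavg-bound}, and since $\norm{\cdot}_{\spaceQ}$ does not dominate $\norm{\cdot}_{\spaceQ'}$, the two links cannot be concatenated. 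The failure is not merely formal: testing $R^T\overline{\wb}$ against functions of the form $\ext^i\zb$ and using \eqref{eq:AiEj_orthogonality} shows that $\norm{R^T\overline{\wb}}_{\spaceQ}$ is comparable to $\sum_i\norm{|D|\,\wb_i}_{\spaceW'}$, so for oscillatory $\wb_i$ the quantity $\norm{B^TR^T\overline{\wb}}_{\spaceV'}$ loses a full power of the frequency relative to $\norm{\overline{\wb}}_{(\spaceW)^N}$. What your construction actually delivers is a bounded-below estimate (hence an inf-sup) with the multiplier measured in the $(\spaceW')^N$ norm rather than the $(\spaceW)^N$ norm appearing in the statement; closing the gap requires either adopting that weaker norm or supplying an additional argument for the $H^{\frac12}(\gamma)$ scaling. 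To be fair, this is precisely the closed-range question that the paper's own one-line proof does not engage with either.
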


\begin{proof}
Firstly, let us observe that the operator $R B$  is bounded as it is a combination of trace operator and linear bounded operators\cite{LHPZ}.
From assumptions \ref{ass:V}, it follows also that the operator $(RB)^T$ is linear, continuous, and bounded.
	
From standard saddle-point theory~\cite{Boffi2013}, a
sufficient condition for
the well-posedness is that $(RB)^T$ is injective.

Let $\overline{\wb} \in \spaceWLagr$ such that $B^T R^T \wb = 0$.
It follows that $R^T \overline{\wb} \in \ker (B^T)$, i.e.

$$
\duality{R^T \overline{\wb},\traceOp\vb}_{\Gamma}= \duality{R^T \overline{\wb}, C \vb}_\Gamma, \forall \vb \in \spaceV. $$

From the properties of $R^T$ (Lemma \ref{lemma:RT}) it follows that
$$
\duality{R^T \overline{\wb},\traceOp\vb}_{\Gamma}
= 0,\; \forall \vb \in \spaceV,
$$
and hence $R^T \overline{\wb} = 0$.
Since $R^T$ is injective on
$\spaceWLagr$ (Lemma \ref{lemma:RT}), we obtain that
$\overline{\wb} = 0$. 

\end{proof}

\subsection{Axis-symmetric deformation}
\label{sec:axis-sym}

In this Section, we discuss more in details the implications of considering a coupling conditions modeling an axis-symmetric deformation 
of the vessel wall, i.e.,  resulting in a source term in \eqref{eq:model-problem} of the form
\begin{equation}\label{eq:g_n}
	\boldsymbol{g}(\mathbf x) = g_{\gamma}\left(\Pi(\mathbf x)\right) \nb(\mathbf x),
\end{equation}
directed along the normal $\nb$ to the interface $\Gamma$, where $g_{\gamma}: \gamma \to \mathbb R$ 
denotes the inflation or deflation of the vessel.

Based on the notation used in equation \eqref{eq:RT}, let us decompose the reduced space as
\begin{equation}
\label{eq:WN-product}
\spaceWLagr = \boldsymbol{W}_{\gamma}^{(1,2)} \times 
\boldsymbol{W}_{\gamma}^{(3,N)}\,,
\end{equation}
where $\boldsymbol{W}_{\gamma}^{(1,2)} := H^{1/2}(\gamma) \times H^{1/2}(\gamma)$ specifies
the degrees of freedom of the first two components, 
and
$\boldsymbol{W}_{\gamma}^{(3,N)} := \Pi_{i=3}^N [H^{1/2}(\gamma)]^d$ refers to the components from $3$ to $N$.
In line with \eqref{eq:WN-product},  let us denote with $\overline{\wb} = \overline{\wb}_{(1,2)} + \overline{\wb}_{(3,N)}$ 
each $\overline{\wb} \in \spaceWLagr$
as sum of two components.

Let us recall that the normal vector to the interface can be expressed as a combination of the basis functions
$\varphi_1$ and $\varphi_2$ (see equation \eqref{eq:n-tau}) and belongs to the subspace $\boldsymbol{W}_{\gamma}^{(1,2)} \times \left\{0\right\}$ of $\spaceWLagr$.
According to the definition of $R^T$, for a source term
normal to the interface it holds
$$
\duality{R\boldsymbol{g},\overline{\wb}}
= \duality{\boldsymbol{g},R^T
\overline{\wb}} = 
\duality{\boldsymbol{g},R^T
\overline{\wb}_{(1,2)}}\,,
\forall
\overline{\wb} \in \spaceWLagr\,.
$$

The latter allows to reformulate 
equation 
\eqref{eq:model-problem-weak-abstract-reduced-b} of the reduced Lagrange multipliers problem component-wise as
$$
\begin{aligned}
\duality{R B \ub,\overline{\wb}_{(1,2)}} & = \duality{R\boldsymbol{g},
\overline{\wb}_{(1,2)}}\\
\duality{R B \ub,\overline{\wb}_{(3,N)}} & = 0\,.
\end{aligned}
$$
This observation 
motivates us to consider,
for the case of axis-symmetric source terms, 
only the case $N=2$, since the contribution
to the solution of the
additional degrees of freedom
introduced choosing $N>2$ is expected to be small, in particular, 
depending on how much the solution
$\ub$ differs, locally, from a
axis-symmetric (normal) displacement field.
The numerical test presented in Section \ref{ssec:higher-order} 
will focus on the error committed when omitting the extra modes.

\section{Numerical results}
\label{sec:numerics}
This section is dedicated to the numerical validation of
the reduced Lagrange multiplier approach described in Section \ref{sec:reduced}
considering different benchmarks.

First, we monitor the converge rate of the method on a simplified case where the analytical solution is known (Section \ref{ssec:2d-model-problem}), and investigate the role of modes in relation to the accuracy of the method (Section \ref{ssec:higher-order}). Next, we study how the presence of the inclusions affects the elastic properties of the domain, considering both two- and three-dimensional setups
(Section \ref{ssec:effective-mat}).
The numerical simulations have been obtained using the finite element library \textsf{deal.II} \cite{dealII94}. 

\subsection{Two-dimensional axis-symmetric problem}\label{ssec:2d-model-problem}
The first example considers a circular domain of 
radius $R$, containing a single circular inclusion of radius $r_i \ll R$ located at its center.
Imposing homogeneous Dirichlet boundary condition on the
outer radius and at a given normal displacement $\bar{u}$ at the inclusion boundary, problem \eqref{eq:model-problem} admits the analytical solution in polar coordinates 
\cite{heltai-caiazzo-2019}
\begin{equation}
	\label{eq:analytic}
	u_r(r,\theta) = c_1(r_i, R, \bar{u}) \left(\frac{R^2}{r} - r\right),\;
    u_{\theta}(r,\theta) = 0\,.
\end{equation}
with $c_1 =  \dfrac{ r_i\bar{u}R^2}{R^2 - r_i^2}$.
The analytical solution for the Lagrange multiplier is 
$\Lambda = 12 \pi \frac{\bar{u}}{R^2 - r_i^2}$.
The formulation \eqref{eq:model-problem-weak-abstract-reduced} has been solved with piecewise linear finite elements for the displacement and with different values of the dimension $N$ of the reduced Lagrange multiplier space.
\begin{figure}[!ht]
	\centering
	\includegraphics[width=.8\textwidth, page=7]{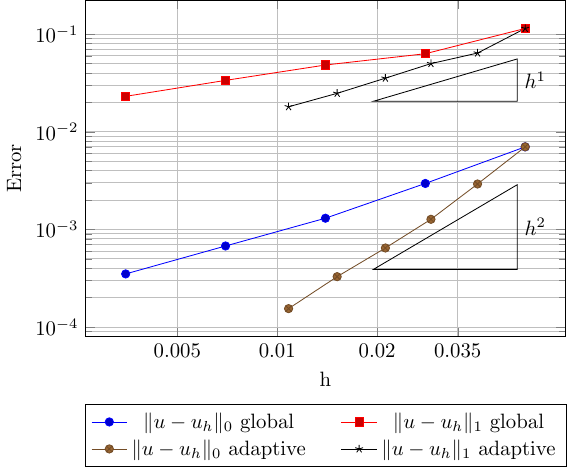}\hspace{1cm}
	\caption[Test case 1: Convergence rate]{
		Left: Axis-symmetric problem with $R=1$, $r_i = 0.2$, and $\bar{u}=0.1$.
		Convergence rates for the case $N=2$ with adaptive and global refinements. Right: Convergence rate of the reduced Lagrange multiplier. The discretization size ($x$-axis) is defined as $h := (\#\text{dofs/d})^{-d}$
	}
	\label{fig:err_test1}
\end{figure}
The results of the convergence study are
summarized in Figure \ref{fig:err_test1} (left).
Using an adaptive mesh refinement, we obtain second order 
convergence in $L^2$ and first order in $H^1$, 
consistent with the expectations of the immersed method (see \cite{LHPZ}). 
Using an adaptive refinement
we obtain a second order convergence also for the Lagrange multiplier
(Figure \ref{fig:err_test1}, right).

\subsection{Effect of the higher-order modes}\label{ssec:higher-order}
As observed in Section \ref{sec:axis-sym}, 
imposing a normal source on the
inclusion boundary, whose magnitude only depends on the position along
the representative manifold $\gamma$, justifies 
the approximation with $N=2$ (and a two-dimensional reduced-order space).
However, in general, the role of higher-order modes is subjective to the case analyzed, especially in the case of multiple inclusions where the 
the overall domain is no longer axis-symmetric, as underlined for the equivalent scalar problem in \cite{LHPZ}, and the effect of the local boundary conditions might superimpose an yield to non-negligible component
also on the tangential directions.

The purpose of this example is to investigate
the impact of the choice $N=2$ in a simple setting.
For this purpose,  we consider $m=3$ inclusions in a squared domain, and
compute the resulting stresses via Equation \eqref{eq:lagrange-multiplier}
(i.e., as a function of the reduced Lagrange multipliers) for different
geometrical parameters (radii of the inclusions) and for different dimensions of the reduced Lagrangian multipliers space.

Let $N\geq 2$ be fixed and equal for all the inclusions, let us denote with 
$\boldsymbol{W}_{\gamma_i}^{(N)}$
the reduced space for the $i$-th inclusion spanned by $N$ modes, and with
\[
\boldsymbol{W}^{(N)} := \Pi_{i=1}^m \boldsymbol{W}_{\gamma_i}^{(N)}
\]
the space of Lagrange multipliers for all inclusions.

We focus on a two-dimensional spatial setting ($d=2$).
In this case, inclusions are represented by points and 
$\boldsymbol{W}^{(N)}$ has dimension $m \times(2N-2)$.
As done in \eqref{eq:RT}, let us denote an element 
$\bLam^{(N,i)} \in \boldsymbol{W}_{\gamma_i}^{(N)}$ as
\begin{equation}
\bLam^{(N,i)} = 
\left[
		 \left[\begin{array}{c}
		\Lambda^{(N,i)}_{1,x} \\[0.5em] 0
	\end{array}\right], \;
    \left[\begin{array}{c}
		 0 \\[0.5em] \Lambda^{(N,i)}_{2,y}
	\end{array}\right],\left[\begin{array}{c}
		\Lambda^{(N,i)}_{3,x} \\[0.5em] \Lambda^{(N,i)}_{3,y}
	\end{array}\right],\hdots,\left[\begin{array}{c}
		\Lambda^{(N,i)}_{N,x} \\[0.5em] \Lambda^{(N,i)}_{N,y}
	\end{array}\right]
	\right]
\label{eq:LambdaComponents}	
\end{equation}
in terms of the degrees of freedom corresponding to the Lagrange multipliers. 

To assess quantitatively the
error resulting from using only two modes ($N=2$) on each inclusion, 
we introduce the quantity 
\[
\text{e}_{N,i} = \frac{\|\bLam^{(N,i)}\|_{l2}^2 - \left(\left|\Lambda^{(N,i)}_{1,x}\right|^2 + \left|\Lambda^{(N,i)}_{2,y}\right|^2\right)}{\|\bLam^{(N,i)}\|_{l2}^2}\,,
\]
i.e., representing the magnitude of the components of 
$\bLam^{(N,i)}$ of index greater than two.
The considered numerical example, with $m=3$, is depicted together with
the numerical solution in a particular configuration in Figure \ref{fig:subsec_modes_example}.

\begin{figure}[ht]
\centering
\begin{tikzpicture}
    \centering
   \node (img) at (0,0){\includegraphics[width=0.5\textwidth]{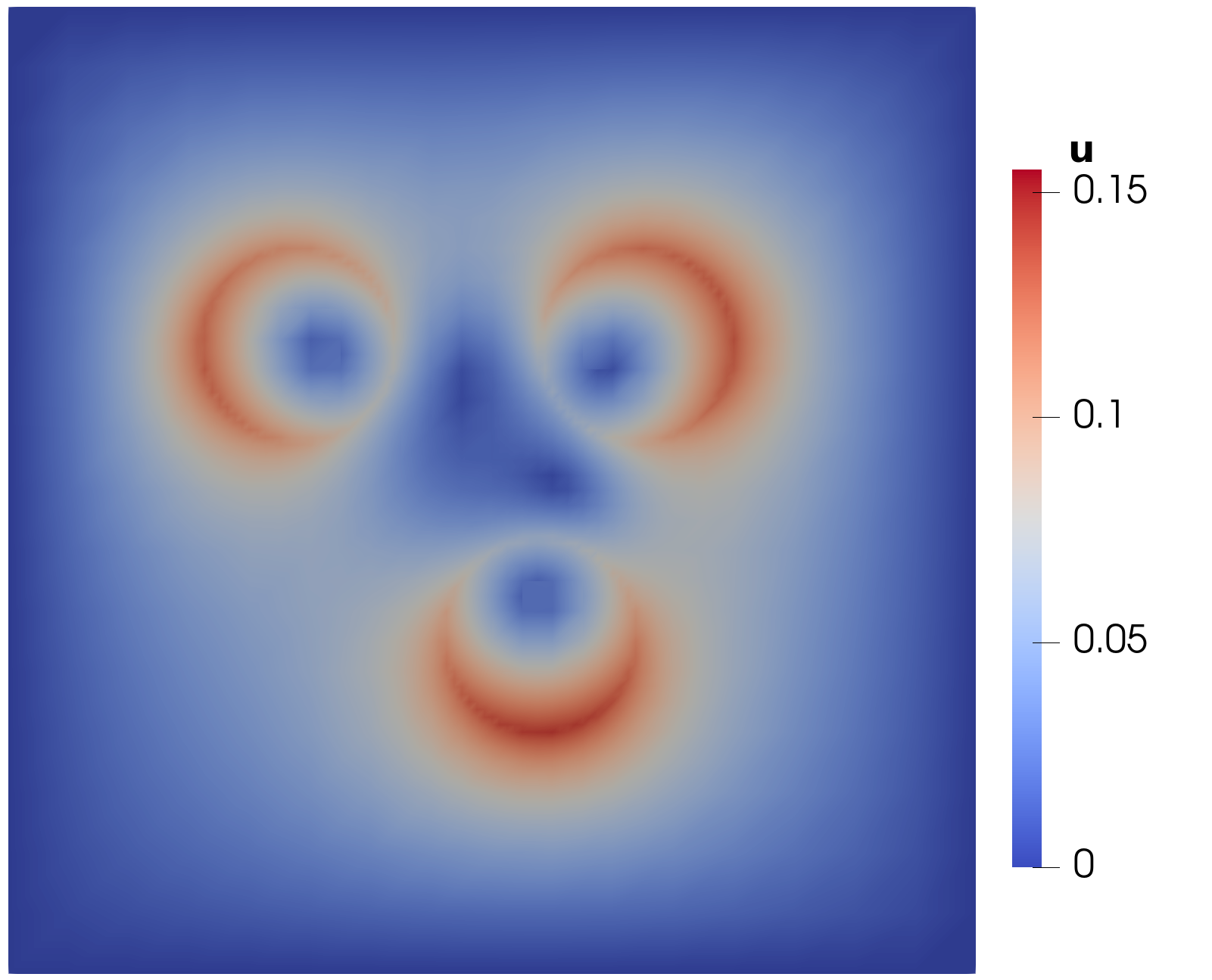}};
   \node [below left,text width=1cm,align=center] at (0.65,1){1};
   \node [below left,text width=1cm,align=center] at (-1.4,1.3){2};
   \node [below left,text width=1cm,align=center] at (0.02,-0.5){3};
   \end{tikzpicture}
    \caption[test case 3]{Example of solution with three inclusions, showing the set up use for the analysis of modes, with inclusions positioned at $(0.3,0.3), (-0.4,0.3),(0.1,-0.3),~r_i= 0.2,~\bar{u}=0.1$, in domain $[-1,1]^2$}
    \label{fig:subsec_modes_example}
\end{figure}

\begin{table}
	\centering
	\pgfplotstableread[col sep=space,comment chars={c}]{./data/test3_lambdas.txt}\DataModesTable
	\pgfplotstabletypeset[
	    col sep=space,
	    string type,
	    create on use/L21/.style={create col/expr={(abs(\thisrow{lambda14})^2)/\thisrow{lambda1norm}}},
	    create on use/L22/.style={create col/expr={(abs(\thisrow{lambda15})^2)/\thisrow{lambda1norm}}},
	    create on use/error/.style={create col/expr={max(1-((abs(\thisrow{lambda14})^2)/\thisrow{lambda1norm} + (abs(\thisrow{lambda15})^2)/\thisrow{lambda1norm}), 0)}},
	    columns/lambda1norm/.style={column name=$\|\boldsymbol{\Lambda}^{(N,2)}\|_{l2}$},
	    columns/L21/.style={column name=$\frac{|\Lambda^{(N,2)}_{1,x}|^2}{\|\boldsymbol{\Lambda}^{(N,2)}\|_{l2}^2}$,dec sep align,
	        preproc/expr={100*##1},
	        postproc cell content/.append code={
	            \ifnum1=\pgfplotstablepartno
	                \pgfkeysalso{@cell content/.add={}{\%}}%
	            \fi
	        },
	    },
	    columns/L22/.style={column name=$\frac{|\Lambda^{(N,2)}_{2,y}|^2}{\|\boldsymbol{\Lambda}^{(N,2)}\|_{l2}^2}$,dec sep align,
	        preproc/expr={100*##1},
	        postproc cell content/.append code={
	            \ifnum1=\pgfplotstablepartno
	                \pgfkeysalso{@cell content/.add={}{\%}}%
	            \fi
	        },
	    },
	    columns/error/.style={column name={$\text{e}_{N,i}$ (\%)},dec sep align,column type/.add={}{|},
	        preproc/expr={100*##1},
	        postproc cell content/.append code={
	            \ifnum1=\pgfplotstablepartno
	                \pgfkeysalso{@cell content/.add={}{\%}}%
	            \fi
	        },
	    },
	    columns/ri/.style={column name=$r_i$},
	    columns/modes/.style={column name=\# Modes},
		every head row/.style={before row=\hline,after row=\hline\hline, column type/.add={}{|}},
	    every last row/.style={after row=\hline},
	    every first column/.style={column type/.add={|}{|}},
	    every last column/.style={column type/.add={}{|}},
	    every nth row={7}{before row=\midrule},
	    columns={ri, modes, lambda1norm, L21, L22, error}
	]\DataModesTable
	\caption{Euclidean $l^2$-Norm of the Lagrange multiplier for the second inclusion (number 2 in Figure \ref{fig:subsec_modes_example}), relative norms of the first two modes, and corresponding relative error for different sizes of inclusions.}
	\label{tab:errors}
	\end{table}

Table \ref{tab:errors} shows the norm of the Lagrange multiplier solution for the second inclusion ($i=2$), 
as a function of the
inclusion radius and of the number of modes (up to $N=8$), comparing the leading order modes 
($\Lambda_1$ and $\Lambda_2$) with the remaining ones.
The results for the other inclusions are very similar and will be omitted from the discussion.

The first two components are largely predominant, and the relative truncation error of the order of $10^{-2}$ (or less).
This validates the choice $N=2$ also in the non-symmetric case, provided
the radii of the inclusions are small enough. In fact, the truncation error decreases when decreasing the size of the inclusion.
This observation is in line with what observed
in the context of 3D-1D models coupled via Neumann boundary conditions\cite{heltai-caiazzo-2019}, in which an hypersingular, 
axis-symmetric, approximation of the immersed interface was used.

Figure \ref{fig:modes_2} depicts in more detail the predominance of the leading modes ($i=1, 2$) the relative decrease of the higher modes ($N=3$ to $N=8$) magnitudes, normalized by to the Euclidean $l^2$ norm of the Lagrange multiplier. 
\begin{figure}
	\centering
	\includegraphics[width=.7\textwidth, page=1]{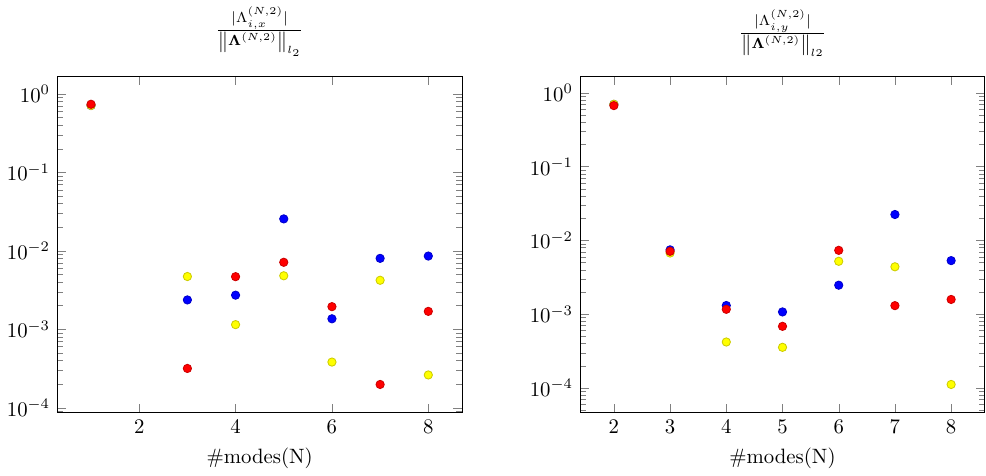}
	\caption[Test case 3]{%
	Components of vector $\bLam$ by $N$, total number of modes used, for $r_i = $ 0.2 (red), 0.1 (yellow), 0.05 (blue). The three highest values in both graphs overlay almost perfectly. Left figure shows the first component ($\Lambda^{(N,2)}_{i,\alpha}, ~ \alpha = x$), i.e. first line of Eq. \eqref{eq:LambdaComponents}, right figure shows $\alpha = y$, the second line of \eqref{eq:LambdaComponents}.}\label{fig:modes_2}
   \end{figure}

\subsection{In-silico modeling of effective material behavior}\label{ssec:effective-mat}
The proposed multiscale model is motivated by applications in the context of tissue imaging, where the data acquired by 
techniques, such as elastography or diffusion-weighted imaging, depend on the underlying physics -- e.g.,  on the interaction of
solid and fluid phases -- but the limited image resolution allows only for effective (macroscale) tissue representations.
Often, these effective descriptions are based on linear elasticity with homogeneous mechanical parameters. However, certain applications
require to better understand how the fluid phase, or the structure of the vasculature, are reflected
in the behavior of the tissue at the macroscale. This is the case, for instance, of the usage of medical imaging to 
characterize the presence of pathological conditions in which fluid conditions play a relevant role, such as hypertension 
(increase in pressure) or tumor growth.
The numerical tests presented in this section are devoted to the usage of the reduced Lagrange multipliers method to bridge this multiscale gap, investigating, through computational models, the influence of fluid microstructures on tissue effective dynamics.

On the one hand, these tests address, from the perspective of mathematical modeling, results recently
presented in the context of tissue elastography concerning the importance of understanding the interplay between 
solid and fluid phases for medical imaging applications in non-invasive diagnostic\cite{JAITNER2025312,safraou-etal-2023,palaniyappan-etal-2016,guo-etal-2020}.
On the other hand, the in silico study aims at providing a first proof of concept for using the reduced
Lagrange multipliers in the context of inverse problems for the estimation of 
effective mechanical parameters.
\subsubsection{Effective material parameters for varying microstructure}

The first example considers a two-dimensional tissue sample
with a fixed volume fraction, but with different distributions of the inclusions in space.
Namely, we consider a fixed number of inclusions within the domain, with the same radii, but creating three different geometrical distributions (see Figure \ref{fig:microstructures}):
\begin{itemize}
	\item[(i)] inclusions placed in a structured array (denote, in what follows, as \textit{structured} setup);
	\item[(ii)] inclusions placed randomly, but  with fixed \textit{microscale} volume fraction, i.e., dividing the domain in boxes, and placing, within each box, an inclusion in a random position (\textit{semi-structured} setup);
	\item[(iii)] inclusions placed fully randomly, but with fixed volume ratio at the \textit{macroscale} (\textit{random} setup); these configurations have been realized removing overlapping inclusions, iteratively adding new inclusions until the fixed total number has been reached, and performing $20$ simulations for each number of inclusions.
\end{itemize}
\begin{figure}[!h]
	\centering
	\includegraphics[page=1, width=0.25\textwidth]{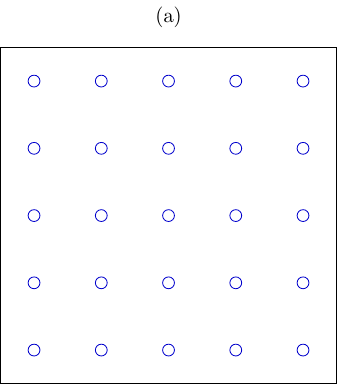} \hspace{0.1cm}
	\includegraphics[page=2, width=0.25\textwidth]{standalonefigures/test45_setup.pdf} \hspace{0.1cm}
	\includegraphics[page=3, width=0.25\textwidth]{standalonefigures/test45_setup.pdf}
	\caption{Example of the different setups used for the modeling and simulation of effective material (with $m=25$ inclusions, $r_i = 0.05$, $v_f \sim 0.05$). Left: inclusions in a structured array. Center: inclusions placed randomly within structured placed boxes (i.e., fixing the porosity in each box). Right: Inclusion placed randomly, removing overlapping ones and fixing the volume fraction.}\label{fig:microstructures}
\end{figure}
In these settings, we study how the behavior of the
\textit{effective} material properties depend on the microstructure configuration, as a function of the Lame's constant $\lambda$, $\mu$ of the solid matrix, of the boundary condition imposed at the inclusion boundaries (the normal deformation),
of the total volume fraction (i.e., of the number of inclusions),
and of the inclusions distribution. 
Of particular interest is the stress at the boundary of the domain induced by the presence of the internal structure. This would result in an added value in the effective mechanical properties of the composite material.

Let us introduce the quantity
$$
F_i^{\alpha} = \int_{\Gamma_{\alpha}} [\sigma(\ub) \nb ]_i,
$$
i.e., the inclusion - induced stress on a
boundary face $\Gamma_{\alpha}$, and on a given direction 
$i \in \left\{1, \hdots,d\right\}$. A sketch of the 
considered numbering of the faces in provided in Figure 
\ref{fig:test45_setup}.
We will refer to $F^{\alpha}_i$ as \textit{tangential stress} if $\nb_{\alpha} \perp i$ and as \textit{normal stresses} otherwise. 
\begin{figure}
	\centering
\includegraphics[page=4]{standalonefigures/test45_setup.pdf}
\hspace{0.5cm}
\includegraphics[page=12]{standalonefigures/test45_setup.pdf}
\caption{Schematic geometry of the 2D and 3D test cases with face numbering.}\label{fig:test45_setup}
\end{figure}
The goal of this test is to highlight that both components of the stress (normal and tangential) need to be adjusted to the presence of the internal structure by adding the inclusion -- induced stress. 

\begin{figure}
	\centering
	\includegraphics[width=.43\textwidth, page=5]{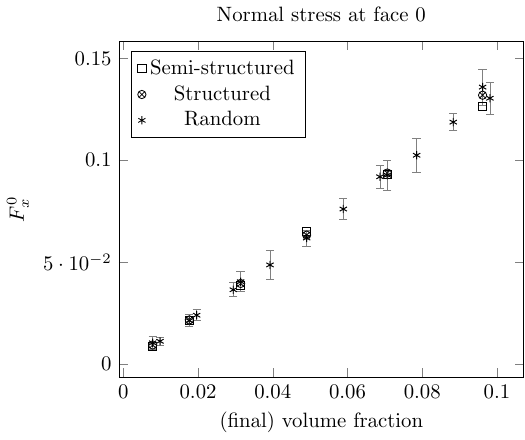}
		\includegraphics[width=.43\textwidth, page=8]{standalonefigures/stresses_test4.pdf}
\caption{Tangential stress, varying the volume fraction and
	for the different geometrical setups. 
	The results for the random configuration show the
outcome of each of the 20 simulations (tiny gray dots), the
    averages, and the standard deviations (gray bars).}
\label{fig:tangential}
\end{figure}
Figure \ref{fig:tangential} highlights that
the effect on the tangential stresses are
mostly due to the breaking of symmetry 
in the random configurations (i.e., with highest values
for the configurations where inclusions are
placed very close to the boundary).
In particular, Figure \ref{fig:40} shows the two cases for $40$ inclusions that produce the highest and lowest value of $F^2_x$.
\begin{figure}[!ht]
	\centering
	\includegraphics[page=5, width=0.25\textwidth]{standalonefigures/test45_setup.pdf} \hspace{0.2cm}
	\includegraphics[page=6, width=0.25\textwidth]{standalonefigures/test45_setup.pdf}
	\caption{Configurations of 40 inclusions that show the highest and lowest value (among the 20 realization of random arrangement) of Tangential stress.}
	\label{fig:40}
\end{figure}
On the other hand, when looking at the normal stress (Figure \ref{fig:normal}) the volume fraction (i.e., the number of inclusions) has the most significant influence, and the relation with is nearly linear.
\begin{figure}[!ht]
	\centering
	\includegraphics[width=.43\textwidth, page=1]{standalonefigures/stresses_test4.pdf}
	\includegraphics[width=.43\textwidth, page=3]{standalonefigures/stresses_test4.pdf}
	\caption{Normal stress, varying the volume fraction and
		for different inclusion distributions. 
		The results for the random configuration show the averages and the standard deviations (grey bars) over $N=20$ simulations}
	\label{fig:normal}
\end{figure}

\subsubsection{In silico evaluation of effective mechanical properties}
The previous examples showed that considering an underlying homogeneous structure for the fluid inclusion results in effective
parameters mostly dependent only on the volume fraction. The purpose of the next set of numerical tests is to investigate
the influence of non uniform inclusion distributions on the effective mechanical behavior.

To this purpose, we divide the tissue in two subdomains
(inner and outer), considering structured distributions with a higher density in the inner part.
Namely, the outer domain contains, in all considered cases, 12 inclusions, whilst the number of inclusions in the inner domain varies from 9 (3$\times$3 array) to 121 (11$\times$11 array).
The resulting arrangements are shown in Figure \ref{fig:microstructuresCentered}. 
All inclusions have the same radius  $r_i = 0.05$ and 
the elastic properties of the solid matrix are set as $\lambda = \mu = 1$.

\begin{figure}[!ht]
	\centering
	\includegraphics[page=7 , width=.19\textwidth]{standalonefigures/test45_setup.pdf}
	\includegraphics[page=8 , width=.19\textwidth]{standalonefigures/test45_setup.pdf}
	\includegraphics[page=9 , width=.19\textwidth]{standalonefigures/test45_setup.pdf}
	\includegraphics[page=10, width=.19\textwidth]{standalonefigures/test45_setup.pdf}
	\includegraphics[page=11, width=.19\textwidth]{standalonefigures/test45_setup.pdf}
	\caption{Different setups used for the modeling and simulation of effective material, with $m=21, ~37, ~61,~93$ and $133$ inclusions
		, with radius $r_i = 0.05$.
	}\label{fig:microstructuresCentered}
\end{figure}

For each setup, in which the volume fraction and the number of inclusions are fixed, we impose different values of displacement $\bar{u}$ at the inclusions boundary.

Results for the two-dimensional case are
shown in Figure \ref{fig:test4_core}. 
On the one hand, as already seen in the previous example, the symmetry prevents any significant effect on the tangential stress. On the other hand, the normal inclusion-induced stress increases with the volume fraction and with the inclusions expansion value.
\begin{figure}[!ht]
	\centering
	\includegraphics[height=.3\textwidth, page=10]{standalonefigures/stresses_test4.pdf}
    \hspace{0.2cm}
    \includegraphics[height=.3\textwidth, page=11]{standalonefigures/stresses_test4.pdf}
	\caption{Tangential (left) and normal (right) inclusion-induced stress on face 0, for different values of $\bar{u}$ on the set-up cases in Figure \ref{fig:microstructuresCentered}.}\label{fig:test4_core}
\end{figure}

The similar setup has been also investigated in a three-dimensional setting, considering the inclusions aligned to
the $z$ axis (see Figure \ref{fig:test3D_setup}).
\begin{figure}
	\centering
	\includegraphics[height=.25\textwidth]{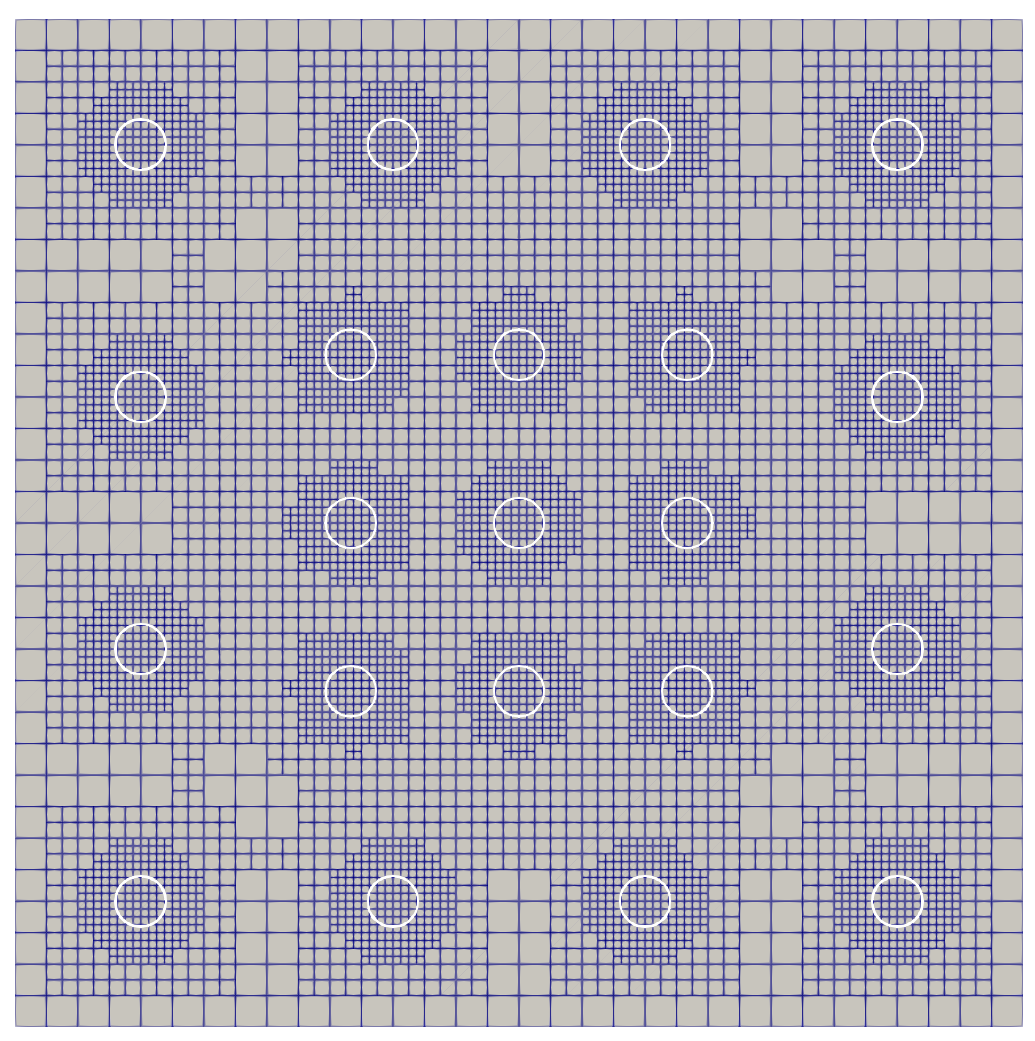}
	\caption{Refined mesh for the three-dimensional extrusion of case (a) from Figure \ref{fig:test4_core}. The number of degrees of freedom is around $2 \times 10^6$ for the underlying solid mesh and $100 m$ for the inclusions ($m = 21$ in the case shown here).}\label{fig:test3D_setup}
\end{figure}
Considering the third dimension allows us to analyze inclusion-induced stresses in more details.
Figure \ref{fig:test3D_core} shows the Normal and tangential stresses for the face 0, which is 
parallel to the $(y,z)$-plane, and to the face 4, parallel to the $(x,y)$-plane, and hence perpendicular to the inclusions (see also Figure \ref{fig:test45_setup}).
The results are in line with the
outcome of the two-dimensional example, with an increase of
the normal stresses when increasing the volume occupied by the fluid and the imposed displacement. 

\begin{figure}[!h]
	\centering
	\includegraphics[height=.3\textwidth, page=1]{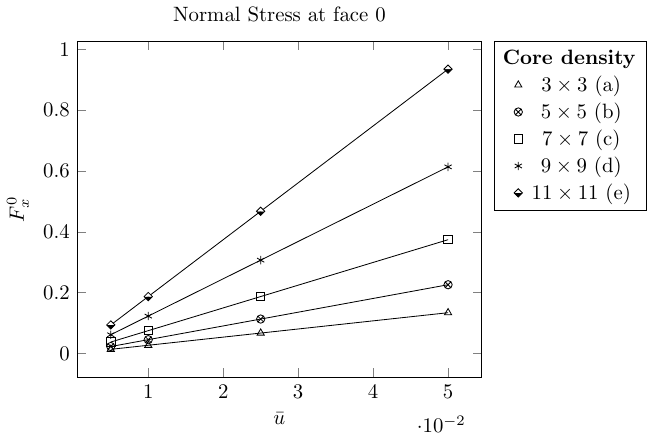}
	\hspace{0.2cm}
	\includegraphics[height=.3\textwidth, page=3]{standalonefigures/test3D_parallel.pdf}
	\\
	\includegraphics[height=.3\textwidth, page=4]{standalonefigures/test3D_parallel.pdf}
	\hspace{0.2cm}
	\includegraphics[height=.3\textwidth, page=5]{standalonefigures/test3D_parallel.pdf}
	\caption{Normal and Tangential stresses on face 0 (normal to $x$ direction) and face 4 (normal to $z$ direction).}\label{fig:test3D_core}
\end{figure}

The final example considers, for the three-dimensional domain different computational experiments varying also the lateral compression imposed on the tissue sample. 
Namely, for different vessel densities, we apply a given compression on the faces 0, 1, 2, and 3 (see Figure \ref{fig:test45_setup}), while
prescribing homogeneous Neumann boundary conditions on faces 4 and 5
(perpendicular to the inclusions).
An example of the obtained numerical results is shown in Figure \ref{fig:test3D_test}.
\begin{figure}[!ht]
	\centering
	\includegraphics[height=.3\textwidth]{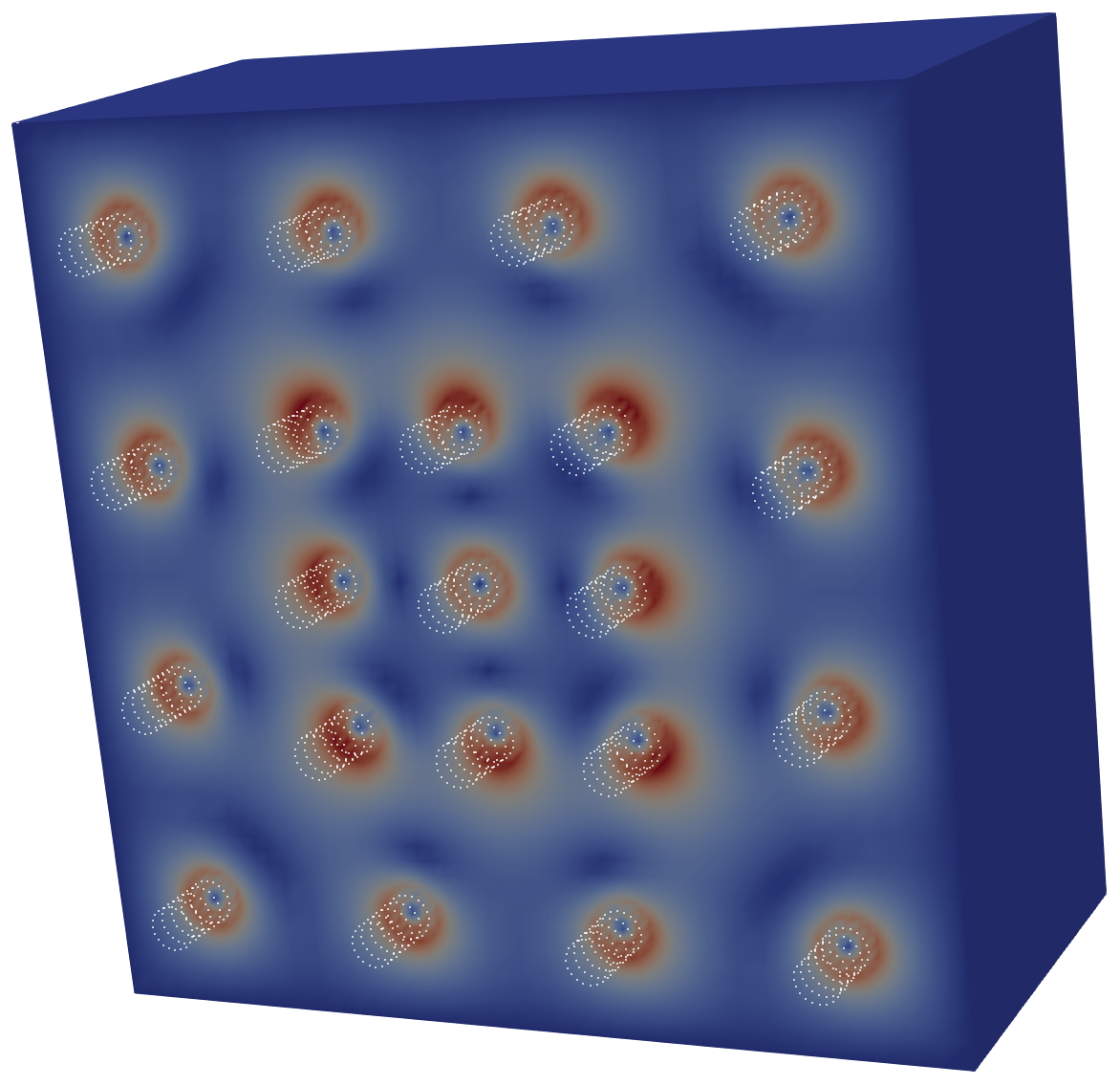}
	\includegraphics[height=.3\textwidth]{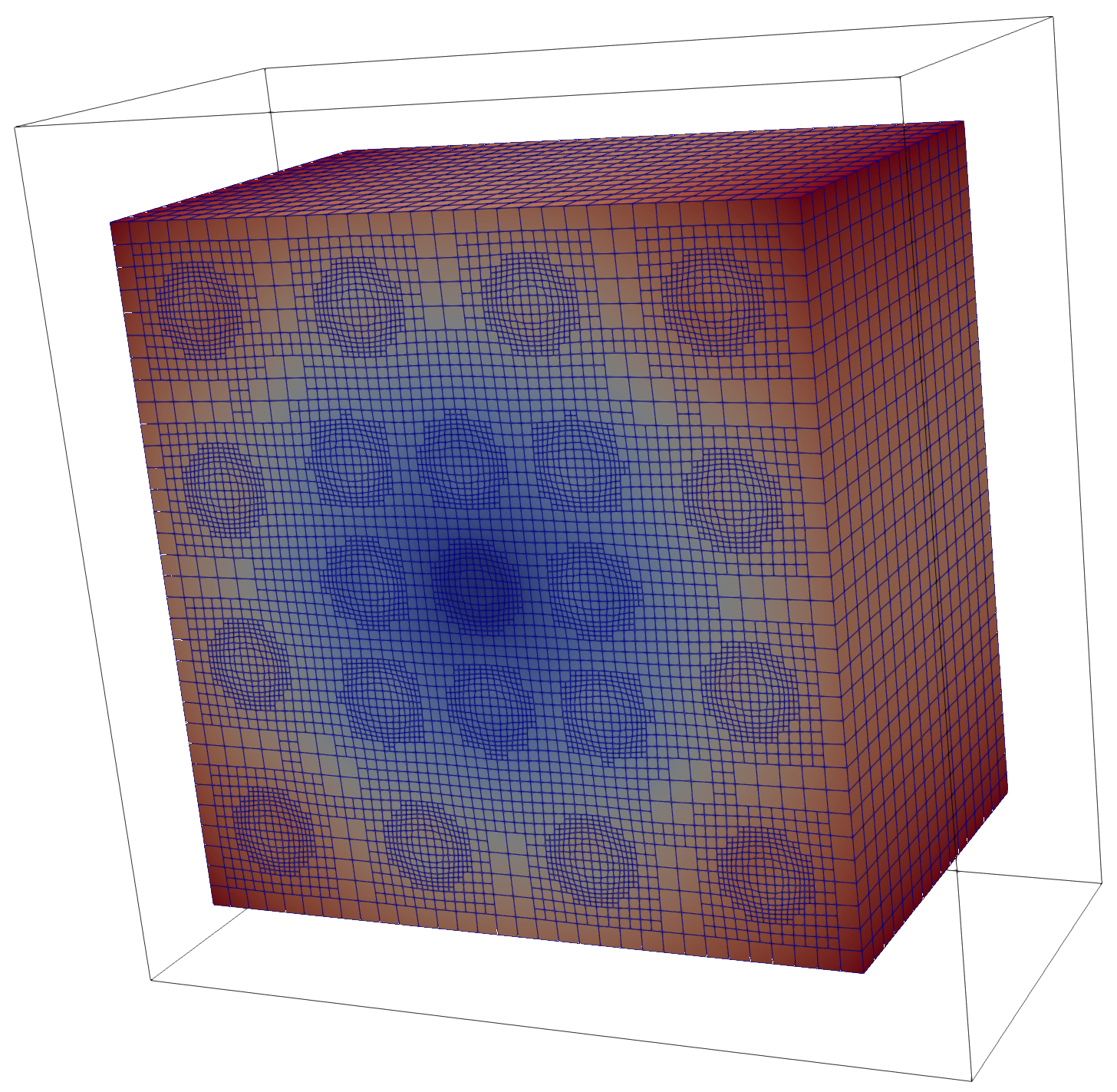}
	\caption{Three-dimensional model with vessel expansion and lateral compression. 
    Left: Results for $\bar{u} = 0.025$ (vessel expansion) and no compression.
    Right: Results for $\bar{u} = 0.025$ (vessel expansion) and lateral compression $c=0.15$.}\label{fig:test3D_test}
\end{figure}

The results are
summarized in Figure \ref{fig:test3D_core_pressure}. 
In this experiment, the slope of the normal inclusion-induced stress on the lateral faces can be related to the compressibility modulus of the effective material (solid matrix + vessels). The simulation shows that
the inclusions add a non-negligible contribution to the compressibility,
hence confirming that handling the different phases consistently can be critical when modeling and estimating mechanical behaviors.
\begin{figure}[!ht]
	\centering
	\includegraphics[height=.3\textwidth, page=11]{standalonefigures/test3D_parallel.pdf}
	\hspace{0.05cm}
	\includegraphics[height=.3\textwidth, page=15]{standalonefigures/test3D_parallel.pdf}
	\caption{Normal inclusion-induced stress on face 0 (perpendicular to $x$-axis) for different configurations, varying the displacement imposed on the vessel interface ($\bar{u}$), and the magnitude of the compression imposed on the lateral faces ($c$).}
	\label{fig:test3D_core_pressure}
\end{figure}

\section{Conclusions} \label{sec:conclusions}
We have proposed, analyzed, and numerically tested a computational 
approach for the simulation of multiscale problems involving the interplay of linear elastic solid and slender fluid inclusions.  
The method handles the inclusions as low-dimensional immersed boundaries within the tissue mesh, combined with the recently proposed reduced Lagrange multiplier approach \cite{LHPZ}, which allows to efficiently handle the mixed-dimensionality of the problem.
From the practical perspective, the usage of the immersed methods allows to reduce the overall complexity of the problem since the explicit discretization of the inclusion interface within the finite element mesh is not required, while the combination with the model-order reduction
allows to efficiently handle the physics of the considered problem.

We extended the method of \cite{LHPZ}, originally proposed for scalar problems, to the setting of three-dimensional linear elasticity coupled to one-dimensional blood flow. 
This problem requires a non-standard conditions at the fluid-solid interface to consistently handle the fact that one-dimensional displacement field imposed only along the normal direction, and that it is invariant to the the macroscale deformation. We showed that this condition can be naturally imposed within the reduced Lagrange multiplier framework by properly selecting the Lagrange multipliers space that guarantee well-posedness of the
continuous formulation.

One limitation of the model considered in this is that it does not account for the the back coupling of the solid matrix onto the fluid vasculature. The extension of the model to this more general setting, based on the preliminary results of Heltai \textit{et al.}\cite{heltai-caiazzo-mueller-2021}, is currently subject of ongoing research.

We assessed the performance of the proposed scheme by validating the expected convergence rates in a two-dimensional model problem with a single inclusion. As next, we considered different cases in two and three dimensions with multiple inclusions.  Our tests indicated as well that as the scale separation increases (thinner inclusions), a reduced-order space of dimension $N=2$ is sufficient for a valid approximation. 
Additionally, we performed a detailed study of the influence of microscale quantities (inclusion distribution) on the  effective mechanical parameters. The results%
demonstrate  that  tissue response is sensitive to variations of vascular parameters and
fluid dynamics, potentially inducing non-linear mechanical responses in presence of inhomogeneities. 
These results show that the proposed multiscale model can be effectively used for the numerical investigation and for the numerical upscaling of multiscale materials, and 
align with recent discoveries that have emphasized the intricate interconnections between effective parameters at microscale 
\cite{safraou-etal-2023,JAITNER2025312}. The application of the model in the context of inverse problems in medical tissue imaging will be one of the topics to be considered in upcoming works.

\section*{Author contributions}
C. Belponer: Formal analysis, Software, Validation, Writing, Visualization \\
A. Caiazzo: Conceptualization, Formal analysis, Methodology, Writing \\
L. Heltai:  Conceptualization, Formal analysis, Methodology, Software, Writing

\section*{Financial disclosure}
The research of C. Belponer has been funded by the Deutsche
Forschungsgemeinschaft (DFG, German Research Foundation), grant DFG CA 1159/1-4
and PE 2143/1-6. L. Heltai acknowledges the partial support of the grant MUR
PRIN 2022 No. 2022WKWZA8 "Immersed methods for multiscale and multiphysics
problems (IMMEDIATE)". The authors acknowledge partial support of the European
Research Council (ERC) under the European Union's Horizon 2020 research and
innovation programme (call HORIZON-EUROHPC-JU-2023-COE-03, grant agreement No.
101172493 ``dealii-X: an Exascale Framework for Digital Twins of the Human
Body''). LH acknowledges the MIUR Excellence Department Project awarded to the
Department of Mathematics, University of Pisa, CUP I57G22000700001. LH is member
of Gruppo Nazionale per il Calcolo Scientifico (GNCS) of Istituto Nazionale di
Alta Matematica (INdAM).

\section*{Conflict of interest}

The authors declare no potential conflict of interests.

\bibliographystyle{abbrv}
\bibliography{bibliography.bib}

\section*{Supporting information}
The source code used in this research, along with the simulation input data, is openly available on the GitHub repository \url{https://github.com/luca-heltai/reduced_lagrange_multipliers}. This repository contains both the implementation of the algorithms and methods described in the paper, as well as the simulation input data required to reproduce the experiments.

\end{document}